\documentclass[11pt,a4paper,leqno]{amsart}
\usepackage[utf8]{inputenc}
\usepackage{amsmath,amssymb,amsfonts,amsthm,yfonts}
\usepackage{latexsym,epsfig,graphicx,subfigure}
\usepackage{mathtools}
\usepackage{esint}
\usepackage{bbm}
\usepackage{calrsfs}
\usepackage{esint}
\usepackage{mathrsfs}
\usepackage{float}
\DeclareFontFamily{U}{mathx}{}
\DeclareFontShape{U}{mathx}{m}{n}{<-> mathx10}{}
\DeclareSymbolFont{mathx}{U}{mathx}{m}{n}
\DeclareMathAccent{\widehat}{0}{mathx}{"70}
\DeclareMathAccent{\widecheck}{0}{mathx}{"71}
\usepackage{graphicx,epsfig,subfigure}
\usepackage{enumitem}
\usepackage{latexsym}
\usepackage{hyperref}
\usepackage{etoolbox}
\usepackage{tikz}
\usetikzlibrary{decorations.markings,calc}
\makeatletter
\def\thm@space@setup{%
	\thm@preskip=8pt
	\thm@postskip=8pt
}
\makeatother
\AtEndEnvironment{proof}{\vspace{6pt}}

\hypersetup{
	colorlinks=true,
	linkcolor=magenta,
	filecolor=black,
	urlcolor=blue,
	citecolor=cyan,
}

\newtheorem{theorem}{Theorem}
\newtheorem{proposition}[theorem]{Proposition}
\newtheorem{lemma}[theorem]{Lemma}
\newtheorem{corollary}[theorem]{Corollary}
\theoremstyle{definition}
\newtheorem{definition}[theorem]{Definition}
\theoremstyle{remark}
\newtheorem{remark}[theorem]{Remark}

\numberwithin{equation}{section}

\newcommand{\R}{\mathbb{R}}
\newcommand{\C}{\mathbb{C}}
\newcommand{\dist}{\operatorname{dist}}
\newcommand{\pv}{\operatorname{p.v.}}
\newcommand{\Cc}{C_c^\infty(\R)}
\newcommand{\norm}[1]{\left\|#1\right\|}
\newcommand{\ip}[2]{\langle #1,#2\rangle}

\DeclareMathAlphabet{\pazocal}{OMS}{zplm}{m}{n}

\makeatletter
\newcommand*{\rom}[1]{\expandafter\@slowromancap\romannumeral #1@}
\makeatother

\begin{document}
	
	\title{Uniform \(L^2\)-bounds for the \(n\)-th Calderón commutator on Lipschitz graphs}
	\author{Joan Hernández, Joan Mateu and Laura Prat}\thanks{Joan Hern\'andez and Laura Prat have been supported by PID2024-165107NB-I00 and Joan Mateu by PID2024-155320NB-I00 (Ministerio de Ciencia e Innovación, Spain).}

	\begin{abstract}
		We prove that, for every real Lipschitz function $A$ and every integer $n\ge1$,
		$\displaystyle
		\norm{T_{A,n}}_{L^2(\R)\to L^2(\R)}\le C\,\norm{A'}_\infty^n ,$
		where $T_{A,n}$ is the $n$-th Calderón commutator and $C$ is an absolute constant. 
		\bigskip
		
		\noindent\textbf{AMS 2020 Mathematics Subject Classification:}  42B20.
		
		\medskip
		
		\noindent \textbf{Keywords:} Singular integrals, $L^2$-boundedness, Calderón commutator.
	\end{abstract}
\maketitle	
  \section{Introduction}
	
	Calderón commutators are closely related to the Cauchy singular integral on Lipschitz graphs. Given a real
	Lipschitz function $A:\R\to\R$ and an integer $n\geq 1$, the
	$n$-th Calder\'on commutator is defined by
	\begin{equation}\label{eq:calderon}
		T_{A,n}f(x)
		=\pv\int_\R
		\left(\frac{A(x)-A(y)}{x-y}\right)^n
		\frac{f(y)}{x-y}\,dy.
	\end{equation}
	A fundamental question is how the $L^2$ norm of $T_{A,n}$
	depends on the order $n$ and on the Lipschitz constant
	$\norm{A'}_\infty$.
	
	The connection with the Cauchy integral becomes apparent
	when one considers the graph
	\[
	\Gamma_A=\{x+iA(x):x\in\R\}.
	\]
	For $\norm{A'}_\infty<1$, the Cauchy kernel admits the expansion
	\[
	\frac{1}{x-y+i(A(x)-A(y))}
	=\sum_{n=0}^\infty(-i)^n
	\frac{(A(x)-A(y))^n}{(x-y)^{n+1}},
	\]
	so that the Calder\'on commutators appear as the coefficients
	of a natural analytic family of singular integral operators.
	
	The $L^2$ boundedness of these operators has a long history.
	Calder\'on \cite{C1,C2} established the boundedness of the first
	commutator and obtained results for the Cauchy integral under
	a smallness assumption on the Lipschitz constant.
	The breakthrough of Coifman, McIntosh and Meyer \cite{CMM}
	removed this restriction and established the boundedness of
	the Cauchy integral on arbitrary Lipschitz graphs. Their
	methods also gave estimates for higher-order commutators
	with polynomial growth of order $(1+n)^4$.
	
	The dependence on $n$ was subsequently improved by
	Christ and Journ\'e \cite{ChJ}, who proved that, for every
	$\varepsilon>0$,
	\[
	\norm{T_{A,n}}_{L^2\to L^2}
	\leq C_\varepsilon(1+n)^{1+\varepsilon}
	\norm{A'}_\infty^n.
	\]
	A linear estimate, previously indicated by Verdera \cite{V} on the basis of joint work with Mateu, was later established in \cite{HMP}:
	\[
	\norm{T_{A,n}}_{L^2\to L^2}
	\leq Cn\norm{A'}_\infty^n.
	\]
	For functions $A$ supported in $[0,1]$, the same work obtained an improvement to $\sqrt{n}$ growth under additional regularity assumptions on $A'$, including Dini and logarithmic Besov conditions.
	
	These results naturally raise the question of whether
	any growth in $n$ is necessary at all. The main result
	of this paper gives a negative answer: the $L^2$ norms
	of the Calder\'on commutators admit a bound independent
	of their order, with no regularity assumptions beyond
	the Lipschitz continuity of $A$.
	
	\begin{theorem}[Main theorem]\label{thm:main}
		There exists an absolute constant $C$ such that,
		for every real Lipschitz function $A$ and every
		integer $n\geq 1$,
		\[
		\norm{T_{A,n}}_{L^2(\R)\to L^2(\R)}
		\leq C\norm{A'}_\infty^n.
		\]
	\end{theorem}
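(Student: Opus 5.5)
By homogeneity ($T_{\lambda A,n}=\lambda^nT_{A,n}$) it suffices to treat $\norm{A'}_\infty=1$ and prove $\norm{T_{A,n}}_{L^2\to L^2}\le C$ with $C$ independent of $n$. The plan is to realize $T_{A,n}$ as a Taylor coefficient of an analytic family of Cauchy-type operators and extract it by a contour integral. Concretely, for $z\in\C$ with $1+zs\neq 0$ for all $s\in[-1,1]$, set
\[
\mathcal C_zf(x)=\pv\int_\R \frac{f(y)}{x-y+z(A(x)-A(y))}\,dy ,
\]
so that for $|z|<1$ we have $\mathcal C_z=\sum_{n\ge0}(-z)^nT_{A,n}$, and hence, for $0<r<1$,
\[
T_{A,n}=\frac{(-1)^n}{2\pi i}\oint_{|z|=r}\mathcal C_z\,\frac{dz}{z^{n+1}},\qquad
\norm{T_{A,n}}\le \frac{r^{-n}}{2\pi}\int_0^{2\pi}\norm{\mathcal C_{re^{i\theta}}}\,d\theta .
\]
We take $r=1-1/n$, so that $r^{-n}\le e$. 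Everything therefore reduces to a bound on the \emph{average} of $\norm{\mathcal C_{re^{i\theta}}}$ over the circle that is uniform in $r<1$. A pointwise supremum will not suffice; that is exactly how the linear bound of \cite{HMP} arises.

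The second step is a geometric description of $\mathcal C_z$. Writing $z=a+ib$, the operator $\mathcal C_z$ is, up to the bounded change of variables $x\mapsto x+aA(x)$ (bi-Lipschitz when $|a|<1$), the Cauchy integral on the graph of a function with Lipschitz constant
\[
L(z)\approx\frac{|b|}{1-|a|}.
\]
The change of variables costs only factors controlled by $(1+|a|)/(1-|a|)$. On $|z|=r$ these factors are harmless except near $z=\pm1$. Near $\theta=0$ (and symmetrically near $\pi$) one has $1-|a|\approx (1-r)+\theta^2/2$ and $|b|\approx|\theta|$. Hence
\[
L(re^{i\theta})\lesssim \frac{|\theta|}{n^{-1}+\theta^2}\lesssim \min\bigl(n|\theta|,\,|\theta|^{-1}\bigr).
\]
Away from $\theta\in\{0,\pi\}$ both $L$ and the distortion factor are $O(1)$, so that part of the integral is $O(1)$ by the Coifman--McIntosh--Meyer theorem.

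The main obstacle is the arcs $|\theta|\lesssim 1$ near $\pm1$. Here one needs the quantitative bound for the Cauchy integral on a Lipschitz graph with constant $L$, and it must be combined carefully with the Jacobian of the change of variables. If one has $\norm{\mathcal C_\Gamma}\lesssim (1+L)^p$ together with a distortion loss $(1-|a|)^{-q}$, then near $\theta=0$ the integrand is roughly
\[
\Bigl(\frac{|\theta|}{\delta+\theta^2}\Bigr)^{p}\,(\delta+\theta^2)^{-q},\qquad \delta=n^{-1},
\]
and its integral is bounded uniformly in $\delta$ only when the exponents are small. The naive exponents are not small enough. So I would not estimate $\norm{\mathcal C_z}$ directly near $\pm1$. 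Instead I would first try to exploit the explicit structure there, treating $z=\pm r+w$ with small $w$. For this, write $x-y+z(A(x)-A(y))=(x-y)(1+zs)$, where $s$ is the difference quotient, which lies in $[-1,1]$. Then decompose $\mathcal C_z$ via a Littlewood--Paley / $T(b)$ argument with accretive function $b=1+zA'$. The aim is a bound of the form $\norm{\mathcal C_z}\lesssim \sup_s|1+zs|^{-1/2}$ rather than a power of $L$, since $\int|1+re^{i\theta}|^{-1/2}\,d\theta$ is bounded uniformly in $r$.

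If this sharper estimate is unavailable, the fallback is to deform the contour. In the regime $|\theta|\lesssim n^{-1/2}$, where the loss is worst, I would replace the arc by real-variable estimates of $T_{A,n}$ itself using the near-degenerate kernel. Since $|1+zs|\ge c$ except when $s\approx\mp1$, this would come from splitting $A$ into pieces where the difference quotient is close to $\pm1$ and pieces where it is not. On the pieces where it is close to $\pm1$, the kernel $\bigl(\frac{A(x)-A(y)}{x-y}\bigr)^n$ is essentially that of $(\pm1)^n$ times the Hilbert transform.
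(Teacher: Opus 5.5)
There is a genuine gap. Your first paragraph reduces the theorem to a false statement, so no estimate on a circle can finish the proof.

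\textbf{The circle cannot work.} Take $A(x)=x$. Then $x-y+z(A(x)-A(y))=(1+z)(x-y)$, so $\mathcal C_z=(1+z)^{-1}H_0$ with $H_0f=\pv\int\frac{f(y)}{x-y}\,dy$, and $\norm{\mathcal C_{re^{i\theta}}}=\pi/|1+re^{i\theta}|$. For $r$ near $1$ its average over $\theta$ is $\asymp\log\frac{1}{1-r}$, coming from $\theta$ near $\pi$. Together with the factor $r^{-n}$, the right-hand side of your inequality is $\gtrsim\log n$ for every $r\in(0,1)$, although $\norm{T_{A,n}}=\pi$.

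The same example rules out the bound $\norm{\mathcal C_z}\lesssim\sup_s|1+zs|^{-1/2}$ you are aiming for. Take $z=-r\to-1$: the left side is $\pi/(1-r)$ and the right side is $(1-r)^{-1/2}$. So no Littlewood--Paley or $T(b)$ refinement rescues the circle.

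\textbf{The obstacle is misidentified.} The growth $L(z)\approx|b|/(1-|a|)$ is an artifact of the horizontal projection $x\mapsto x+aA(x)$. The tangents $1+zA'(x)$ lie on the segment $[1-z,1+z]$. Since $\operatorname{Re}\bigl((1-z)\overline{(1+z)}\bigr)=1-|z|^2$, this segment is seen from $0$ under an angle $\omega<\pi/2$ exactly when $|z|<1$. After a rotation, every curve $x\mapsto x+zA(x)$ with $|z|<1$ is therefore a Lipschitz graph with constant $\tan(\omega/2)\le1$.

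The only loss on the circle is the Jacobian factor $1/\min_{s\in[-1,1]}|1+zs|$. It is sharp, and integrating it gives exactly the $O(\log n)$ bound that the paper records as its first attempt.

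\textbf{The fallback is not an argument.} You propose splitting $A$ according to whether difference quotients are close to $\pm1$. But $q(x,y)$ depends on pairs of points and is not localized by decomposing $A$. Nothing is said about controlling $q^n$ when $q$ is close to, but different from, $\pm1$.

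\textbf{The missing idea: leave the circle.} The kernel of $\mathcal C_z$ is $\lambda K_\lambda$ with $\lambda=-1/z$. So $\mathcal C_z$ is defined, with the same geometric description, for all $z\notin(-\infty,-1]\cup[1,\infty)$, that is, for $\lambda\notin[-1,1]$. The extraction formula holds on any piecewise $C^1$ Jordan contour around $0$ in this region. This follows from Fubini and Cauchy's formula applied to $z^{-n-1}/(1+zq(x,y))$ for fixed $(x,y)$, not from the power series.

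In the variable $\lambda$, the paper uses arcs of radius $1/(n+1)$ around $\pm1$. These are joined by straight segments that enter the unit disc at the fixed angle $\varphi=\arctan\frac12$ to the real axis.
\begin{itemize}
\item Because $\pm1$ are approached non-tangentially, the angle under which $[-1,1]$ is seen from $\lambda$ stays $\le\pi-\varphi$. All curves are then rotated Lipschitz graphs with constant $\le\cot(\varphi/2)=2+\sqrt5$, so a single CMM constant suffices.
\item On the segments, $d_\lambda=s/2$ and $|\lambda|^n\le e^{-3ns/8}$ with $s\gtrsim1/n$. Their contribution is at most a constant times $\int_{1/\sqrt5}^{\infty}e^{-3u/8}\,du/u<\infty$.
\item On the arcs, $|\lambda|^n\le e$, $d_\lambda\gtrsim1/n$, and the length is $O(1/n)$.
\end{itemize}
It is this exponential decay of $|\lambda|^n$ inside the unit disc (equivalently, of $|z|^{-n}$ for $|z|>1$) that removes the logarithm. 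It is unavailable on any circle $|z|<1$.
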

	Observe that if $L=\norm{A'}_\infty=0$, then $A$ is constant and $T_{A,n}=0$. If $L>0$, then $T_{A,n}=L^nT_{A/L,n}$. It thus suffices to treat the case $\norm{A'}_\infty\le1$, which we assume from now on. 
	
	The proof of Theorem~\ref{thm:main} uses the Cauchy integral. Rather than estimating each commutator separately, we
	recover the operators $T_{A,n}$ through Cauchy's integral
	formula applied to a family of Cauchy kernels associated
	with complex deformations of the underlying graph.
	The Coifman--McIntosh--Meyer theorem provides the required
	$L^2$ bounds for these kernels, provided the corresponding
	curves remain Lipschitz graphs with uniformly controlled
	constants.
	
	The essential point is therefore geometric. We construct
	contours in the complex plane along which the
	associated curves have uniformly bounded Lipschitz
	constants, while the contribution of the contour integral
	remains bounded independently of $n$. 
	
	The paper is organised as follows. Section~\ref{sec:prelim} collects the necessary preliminaries, including the Coifman--McIntosh--Meyer theorem, and concludes with an outline of the proof of Theorem~\ref{thm:main}. In Section~\ref{sec:geometry}, we establish a geometric criterion ensuring that a curve can be represented, after a suitable rotation, as a Lipschitz graph. In Section~\ref{sec:family}, we introduce and study a complex family of Cauchy kernels and obtain $L^2$ estimates for the
	associated bilinear forms. Section~\ref{sec:extraction} shows how to recover the Calder\'on commutators from this family using Cauchy's integral formula. In Section~\ref{sec:contour}, we first obtain an $O(\log n)$ estimate using a circular contour and
	then construct suitable contours that remove the logarithmic loss and complete the proof of
	Theorem~\ref{thm:main}. Finally, Section~\ref{sec:remarks} discusses sharpness and some further remarks.

	\section{Preliminaries}\label{sec:prelim}
	In this section, we collect the preliminary results needed for the proof of Theorem~\ref{thm:main} and conclude with an outline of the argument.
	\subsection{Basic results}
	We use the bilinear pairing $\ip fg=\int_\R f g$, without complex conjugation. If $T$ is a bounded linear operator on $L^2(\R)$, then we write
	\[
	\norm T=\sup\{|\ip{Tf}{g}|:\norm f_2=\norm g_2=1\},
	\]
	and it suffices to take $f,g$ in a dense subset. 
	
	
	For $f,g\in\Cc$, let $S$ be a compact interval containing $\operatorname{supp}(f)\cup\operatorname{supp}(g)$. Then $f(y)g(x)-f(x)g(y)$ vanishes outside $S\times S$. Moreover, writing $f(y)g(x)-f(x)g(y)=f(y)\bigl(g(x)-g(y)\bigr)-g(y)\bigl(f(x)-f(y)\bigr)$, we get
	\begin{equation}\label{eq:D}
		|f(y)g(x)-f(x)g(y)|\le c_{f,g}\,|x-y|,\qquad c_{f,g}:=\norm f_\infty\norm{g'}_\infty+\norm g_\infty\norm{f'}_\infty .
	\end{equation}

	\begin{lemma}\label{lem:weak}
		Let $K:\R^2\setminus\{x=y\}\to\C$ be a measurable
		antisymmetric kernel satisfying
		$\displaystyle
		|K(x,y)|\leq \frac{B}{|x-y|},$ $x\neq y.$
		Let $\rho:\R^2\to[0,\infty)$ be a measurable symmetric
		function such that $\rho(x,x)=0$ and
		$\displaystyle
		0<\rho(x,y)\leq L|x-y|,$ $x\neq y,
		$
		for some $L>0$. Then, for every $f,g\in\Cc$,
		\[
		\lim_{\varepsilon\to0^+}
		\underset{\rho(x,y)>\varepsilon}\iint
		K(x,y)f(y)g(x)\,dx\,dy
		=
		\frac12\underset{\R^2}\iint
		K(x,y)\bigl(f(y)g(x)-f(x)g(y)\bigr)\,dx\,dy.
		\]
		Moreover, the integral on the right is absolutely convergent.
	\end{lemma}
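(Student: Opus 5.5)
The plan is to symmetrize the truncated integral over the region $\{\rho>\varepsilon\}$ and then pass to the limit by dominated convergence, using \eqref{eq:D} to kill the $1/|x-y|$ singularity.

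\textbf{Absolute convergence.} By \eqref{eq:D} and the bound on $K$, the integrand on the right satisfies
\[
|K(x,y)(f(y)g(x)-f(x)g(y))| \le B\,c_{f,g}.
\]
It also vanishes outside $S\times S$. Hence it is dominated by $B c_{f,g}\mathbbm{1}_{S\times S}$, which is integrable. This proves the final assertion.

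\textbf{Each truncated integral converges.} Fix $\varepsilon>0$ and set $E_\varepsilon=\{(x,y):\rho(x,y)>\varepsilon\}$. Since $\rho\le L|x-y|$, on $E_\varepsilon$ we have $|x-y|>\varepsilon/L$. So $|K(x,y)f(y)g(x)|\le (BL/\varepsilon)\|f\|_\infty\|g\|_\infty\mathbbm{1}_{S\times S}$, and the truncated integral converges absolutely.

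\textbf{Symmetrization.} The set $E_\varepsilon$ is invariant under the swap $(x,y)\mapsto(y,x)$ because $\rho$ is symmetric. The change of variables $x\leftrightarrow y$, together with the antisymmetry $K(y,x)=-K(x,y)$, gives
\[
\iint_{E_\varepsilon}K(x,y)f(y)g(x)\,dx\,dy=-\iint_{E_\varepsilon}K(x,y)f(x)g(y)\,dx\,dy .
\]
Averaging the two expressions yields
\[
\iint_{E_\varepsilon}K f(y)g(x)=\frac12\iint_{E_\varepsilon}K(x,y)\bigl(f(y)g(x)-f(x)g(y)\bigr)\,dx\,dy .
\]

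\textbf{Passage to the limit.} As $\varepsilon\to0^+$, $\mathbbm{1}_{E_\varepsilon}\to\mathbbm{1}_{\{\rho>0\}}$ pointwise. Since $\rho>0$ off the diagonal, this limit is $\mathbbm{1}_{\{x\ne y\}}$, that is, equal to $1$ almost everywhere on $\R^2$. The integrands are all dominated by the integrable majorant $B c_{f,g}\mathbbm{1}_{S\times S}$ from the first step. Dominated convergence therefore gives the stated limit.

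The only delicate point is ensuring the truncated integrals are individually finite, so that the change of variables and the splitting into two integrals are legitimate. This is exactly where the hypothesis $\rho\le L|x-y|$ enters.
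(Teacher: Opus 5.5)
Your proof is correct and follows essentially the same route as the paper. You bound each truncated integral via $\{\rho>\varepsilon\}\subseteq\{|x-y|>\varepsilon/L\}$, symmetrize using the symmetry of $\rho$ and the antisymmetry of $K$, and pass to the limit by dominated convergence with the majorant $Bc_{f,g}\mathbf 1_{S\times S}$ supplied by \eqref{eq:D}.
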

	
	\begin{proof}
		Fix $\varepsilon>0$. Since
		$\displaystyle
		\{\rho(x,y)>\varepsilon\}
		\subseteq
		\left\{|x-y|>\frac{\varepsilon}{L}\right\},
		$
		the truncated integral is absolutely convergent.
		
		By the symmetry of $\rho$ and the antisymmetry of $K$,
		exchanging $x$ and $y$ and averaging the two expressions, we obtain
		\[
		\iint_{\rho(x,y)>\varepsilon}
		K(x,y)f(y)g(x)\,dx\,dy
		=
		\frac12
		\iint_{\rho(x,y)>\varepsilon}
		K(x,y)(f(y)g(x)-f(x)g(y))\,dx\,dy.
		\]
		
		By \eqref{eq:D}, 
		$\displaystyle
		|K(x,y)(f(y)g(x)-f(x)g(y))|
		\leq Bc_{f,g}\mathbf 1_{S\times S}(x,y),
		$
		which is integrable on $\R^2$.
		
		Finally, since $\rho(x,y)>0$ whenever $x\neq y$,
		we have
		$\displaystyle
		\mathbf 1_{\{\rho(x,y)>\varepsilon\}}
		\longrightarrow 1
		$
		for almost every $(x,y)\in\R^2$ as
		$\varepsilon\to0^+$.
		The conclusion follows from the dominated
		convergence theorem.
	\end{proof}
	
	\subsection{The commutator as a bilinear form}\label{sec2}
	Assume $\norm{A'}_\infty\le1$ and let 
	\[
	q(x,y)=\frac{A(x)-A(y)}{x-y}, \quad \text{for} \quad x\ne y.
	\]
	Then $q$ is symmetric and takes values in $[-1,1]$. The kernel of \eqref{eq:calderon} is
	\[
	K_n(x,y)=\frac{q(x,y)^n}{x-y},
	\]
	which is antisymmetric and satisfies $|K_n(x,y)|\le|x-y|^{-1}$. Set 
	$$T_{A,n,\varepsilon}f(x)=\int_{|x-y|>\varepsilon}K_n(x,y)f(y)\,dy.$$
	Lemma~\ref{lem:weak}, applied with $\rho(x,y)=|x-y|$, shows that for $f,g\in \Cc$,
	\[
	\lim_{\varepsilon\to0^+}
	\ip{T_{A,n,\varepsilon}f}{g}=\frac12\iint_{\R^2}K_n(x,y)\,(f(y)g(x)-f(x)g(y))\,dx\,dy, 
	\]
	where the integral on the right is  absolutely convergent. 
	
	Define
	\begin{equation}\label{eq:tn}
		t_n(f,g):=\frac12\iint_{\R^2}
		K_n(x,y)\,(f(y)g(x)-f(x)g(y))\,dx\,dy,
		\qquad f,g\in\Cc.
	\end{equation}
For each fixed $n$, the kernel $K_n$ is a standard
Calder\'on--Zygmund kernel, with constants possibly
depending on $n$. Indeed, the size estimate is immediate,
and the smoothness estimates follow from the Lipschitz
continuity of $A$.

By the classical $L^2$ boundedness of Calder\'on
commutators and standard Calder\'on--Zygmund theory,
the symmetric truncations are uniformly bounded on
$L^2(\R)$, for each fixed $n$, and their principal
values exist almost everywhere; see \cite{Jo,Da}.
Consequently, the truncations converge weakly in
$L^2(\R)$ to $T_{A,n}$.

Lemma~\ref{lem:weak} therefore gives
\[
\ip{T_{A,n}f}{g}=t_n(f,g),
\qquad f,g\in\Cc.
\]
We use these classical results only for this
identification; no estimate uniform in $n$ is needed
at this stage.

Consequently, by density, the desired uniform $L^2$
estimate is equivalent to proving
\[
|t_n(f,g)|\le C\norm f_2\norm g_2,
\qquad f,g\in\Cc,
\]
with $C$ independent of $n$.

	\subsection{The Coifman--McIntosh--Meyer theorem}

				A \emph{rotated Lipschitz graph} with constant $\le M$
		is a curve of the form
		\[
		\Gamma=e^{i\theta}\{s+i\varphi(s):s\in\R\},
		\]
		where $\theta\in\R$ and $\varphi:\R\to\R$ is Lipschitz
		with $\norm{\varphi'}_\infty\le M$.
		We denote by $\sigma$ the arclength measure on $\Gamma$.
		Thus, under the parametrization
		$z=e^{i\theta}(s+i\varphi(s))$,
		\[
		d\sigma(z)=\sqrt{1+\varphi'(s)^2}\,ds.
		\]
		
		For $\varepsilon>0$, define the truncated Cauchy operator
		\[
		\mathcal C_{\Gamma,\varepsilon}F(z)
		=
		\int_{\{w\in\Gamma:\,|z-w|>\varepsilon\}}
		\frac{F(w)}{z-w}\,d\sigma(w),
		\qquad z\in\Gamma.
		\]
		
		\begin{definition}\label{def:CM}
			For $M\ge0$, set
			\[
			C_M:=
			\sup\Bigl\{
			\norm{\mathcal C_{\Gamma,\varepsilon}}_{L^2(\sigma)\to L^2(\sigma)}:
			\Gamma \text{ a rotated Lipschitz graph with constant }\le M,
			\ \varepsilon>0
			\Bigr\}.
			\]
			By construction, $M\mapsto C_M$ is non-decreasing.
		\end{definition}
		
		\begin{theorem}[Coifman--McIntosh--Meyer \cite{CMM}]
			\label{thm:CMM}
			For every $M\ge0$, the constant $C_M$ defined above is finite.
		\end{theorem}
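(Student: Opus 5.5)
The plan is to deduce Theorem~\ref{thm:CMM} from a $T(b)$ theorem for accretive functions, with every constant tracked in terms of $M$ only.

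\textbf{Reduction to unrotated graphs on $\R$.} Let $\Gamma=e^{i\theta}\{s+i\varphi(s)\}$. Since $|z-w|$ and $\sigma$ are invariant under rotation and $1/(z-w)$ only picks up the unimodular factor $e^{-i\theta}$, we may take $\theta=0$. Next we parametrize by $s$, writing $F(w)=h(t)$ with $w=t+i\varphi(t)$. Because $1\le\sqrt{1+\varphi'^2}\le\sqrt{1+M^2}$, the spaces $L^2(\sigma)$ and $L^2(ds)$ are comparable with constants depending only on $M$. The truncated operator then becomes
\[
\int_{|z-w|>\varepsilon}\frac{h(t)\sqrt{1+\varphi'(t)^2}}{s-t+i(\varphi(s)-\varphi(t))}\,dt,
\]
and the factor $\sqrt{1+\varphi'^2}$ is a bounded multiplier that can be absorbed into $h$. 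The region $\{|z-w|>\varepsilon\}$ differs from $\{|s-t|>\varepsilon\}$ only on a set where $\varepsilon/\sqrt{1+M^2}\le|s-t|\le\varepsilon$. On that set the kernel is bounded by $1/|s-t|\lesssim 1/\varepsilon$, so the difference is dominated by $C(M)\,\mathcal M h$, with $\mathcal M$ the Hardy--Littlewood maximal function. It therefore suffices to bound, uniformly in $\varepsilon$, the standard truncations $T_\varepsilon$ of the kernel $k(s,t)=\bigl(s-t+i(\varphi(s)-\varphi(t))\bigr)^{-1}$, acting on $L^2(\R)$.

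\textbf{Properties of the kernel.} The kernel $k$ is antisymmetric. Since $|s-t+i(\varphi(s)-\varphi(t))|\ge|s-t|$, we get $|k|\le|s-t|^{-1}$, and a direct computation gives the Calder\'on--Zygmund smoothness bounds with constant $C(1+M)$. Now set $b=1+i\varphi'$. Then $\operatorname{Re} b=1$, so $b$ is accretive with $\norm b_\infty\le 1+M$. Moreover $b(t)\,dt=dw$ along $\Gamma$.

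\textbf{Applying $T(b)$.} We apply the $T(b)$ theorem of David--Journ\'e--Semmes, in the form with the same accretive function $b$ on both sides, to the operator $M_bTM_b$, whose kernel is $b(s)k(s,t)b(t)$. Two hypotheses must be checked.
\begin{enumerate}[label=(\roman*)]
\item \emph{Weak boundedness.} By antisymmetry and Lemma~\ref{lem:weak}, the pairing of the operator with normalized bumps adapted to an interval $I$ reduces to an absolutely convergent integral of size $O(|I|)$.
\item \emph{$T(b)\in BMO$ and $T^t(b)\in BMO$.} Here $\int_{|s-t|>\varepsilon}k(s,t)b(t)\,dt$ is the Cauchy integral $\int dw/(z-w)$ over $\Gamma$ minus a small arc. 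This is computed explicitly as a difference of logarithms. For $|t|\le R$ it equals $\log\bigl((z-w_-)/(z-w_+)\bigr)$, where $w_\pm$ are the endpoints of the removed arc; the large-scale part tends to a constant as $R\to\infty$. So $T_\varepsilon b$ is bounded by $C(M)$ uniformly in $\varepsilon$, and in particular lies in $BMO$ with norm $C(M)$. The transposed condition is the same statement by antisymmetry.
\end{enumerate}
The $T(b)$ theorem then gives $\norm{T_\varepsilon}\le C(M)$ uniformly in $\varepsilon$. Combined with the reduction above, this yields $C_M<\infty$.

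\textbf{Main obstacle.} The delicate step is (ii): showing that the truncated Cauchy integral of $dw$ is uniformly bounded. One has to control the argument of $(z-w_-)/(z-w_+)$ and the contribution of the far region. The first thing I would try is to use that both endpoints $w_\pm$ lie in a cone around $z$ of aperture determined by $M$, so the relevant arguments stay bounded. The remaining deep input, the $T(b)$ theorem itself, is quoted; its proof rests on a $b$-adapted Littlewood--Paley decomposition and Carleson measure estimates.
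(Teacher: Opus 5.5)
Your argument is correct in outline, but it takes a genuinely different route from the paper. The paper does not prove Theorem~\ref{thm:CMM}. It quotes the original Coifman--McIntosh--Meyer bound for the principal-value Cauchy integral on unrotated graphs, with a constant depending only on $M$. It then justifies the stated reformulation by soft steps:
\begin{itemize}
\item the unitary multiplier $\overline{\tau}$, which converts $dw$ into $d\sigma$;
\item a unitary change of variables that undoes the rotation;
\item Cotlar's inequality for the truncations.
\end{itemize}
You instead take the David--Journ\'e--Semmes $T(b)$ theorem as the black box and verify its hypotheses with $b=1+i\varphi'$. Weak boundedness follows from Lemma~\ref{lem:weak} applied to the antisymmetric kernel $b(s)k(s,t)b(t)$. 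The condition $T(b)\in BMO$ follows from the explicit logarithmic evaluation of $\int dw/(z-w)$. At each scale this is bounded by $\tfrac12\log(1+M^2)+2\arctan M$, because $w_+-z$ and $z-w_-$ lie in the cone $\{|\arg\zeta|\le\arctan M\}$ with moduli in $[\varepsilon,\sqrt{1+M^2}\,\varepsilon]$. This is essentially the McIntosh--Meyer/DJS proof of CMM. It makes transparent why every constant depends on $M$ alone. It is not more elementary, however, since the $T(b)$ theorem is a stronger result than the one being proved. The paper's route is shorter and uses only what is already in the literature. That is all it needs, since only the finiteness of $C_{2+\sqrt5}$ matters later.

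Two steps need tightening.

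\emph{The far-field term.} The term $\log(z-w(s-R))-\log(w(s+R)-z)$ need not tend to a constant as $R\to\infty$. For instance, take $\varphi(t)=c\,|t|\sin\log(1+|t|)$ and $s=0$; the term then equals $-2i\arctan\bigl(c\sin\log(1+R)\bigr)$, which oscillates. What is true is that its dependence on $s$ vanishes as $R\to\infty$. That suffices, because $T(b)$ is only defined modulo constants. To define $T_\varepsilon b$, use the renormalized kernel $k(s,t)-k(s_0,t)$ away from the diagonal.

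\emph{Passage to truncations.} The $T(b)$ theorem gives boundedness of the operator $T$ associated with $k$. It does not directly bound the sharp truncations $T_\varepsilon$, whose kernels are not uniformly standard. To conclude $\sup_{\varepsilon}\|T_\varepsilon\|\le C(M)$, you still need one of two extra steps: Cotlar's inequality, exactly as in item (c) of the paper, or smooth truncations combined with your maximal-function comparison.
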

		
	The Coifman--McIntosh--Meyer theorem is originally formulated for Lipschitz graphs without rotations and with respect to the complex measure $dw$. We explain below why the formulation stated above, involving rotated Lipschitz graphs, arclength measure, and uniform bounds for the truncated operators, follows from their theorem and standard results in Calderón--Zygmund theory.
		
		\begin{enumerate}[label=(\alph*)]
			
	\item \emph{Arclength measure.}
	Let $\Gamma_0=\{s+i\varphi(s):s\in\R\}$, and denote by $\sigma_0$ its arclength measure and by $\tau(w)$ its unit tangent, defined almost everywhere. Since
		\[
		dw=(1+i\varphi'(s))\,ds=\tau(w)\,d\sigma_0(w),
		\qquad |\tau(w)|=1,
		\]
		we have
		\[
		\int_{\Gamma_0}\frac{F(w)}{z-w}\,d\sigma_0(w)
		=
		\int_{\Gamma_0}
		\frac{\overline{\tau(w)}F(w)}{z-w}\,dw.
		\]
		Thus, replacing $dw$ by $d\sigma_0$ amounts to
		composing the Cauchy operator with multiplication
		by $\overline{\tau}$, which is unitary on $L^2(\sigma_0)$.
		Consequently, the two operators have the same
		$L^2(\sigma_0)$ norm.

			\item \emph{Rotations.}
			Suppose that $\Gamma=e^{i\theta}\Gamma_0$, where
			$\Gamma_0$ is a Lipschitz graph, and let $\sigma_0$
			and $\sigma$ denote the corresponding arclength measures.
			The map
			\[
			U_\theta:L^2(\sigma)\longrightarrow L^2(\sigma_0),
			\qquad
			U_\theta F(z)=F(e^{i\theta}z),
			\]
			is unitary, since rotations preserve arclength measure.
			Moreover, rotations preserve distances, and a change
			of variables gives
			\[
			U_\theta\mathcal C_{\Gamma,\varepsilon}U_\theta^{-1}
			=
			e^{-i\theta}\mathcal C_{\Gamma_0,\varepsilon}.
			\]
			Therefore,
			\[
			\norm{\mathcal C_{\Gamma,\varepsilon}}_{L^2(\sigma)\to L^2(\sigma)}
			=
			\norm{\mathcal C_{\Gamma_0,\varepsilon}}_{L^2(\sigma_0)\to L^2(\sigma_0)}.
			\]
			Hence the $L^2$ estimates for Lipschitz graphs extend
			to rotated Lipschitz graphs with exactly the same constants.
			
			\item \emph{Uniform truncations.}
			The Coifman--McIntosh--Meyer theorem gives the
			$L^2$ boundedness of the principal-value Cauchy operator
			on Lipschitz graphs, with a bound depending only on
			the Lipschitz constant $M$.
			To obtain the uniform boundedness of the truncated
			operators, we use Cotlar's inequality.
			Indeed, the kernel $1/(z-w)$ restricted to a Lipschitz
			graph is a standard Calder\'on--Zygmund kernel with
			respect to arclength measure, with constants depending
			only on $M$.
			Cotlar's inequality therefore yields
			\[
			\sup_{\varepsilon>0}
			\norm{\mathcal C_{\Gamma,\varepsilon}}_{L^2(\sigma)\to L^2(\sigma)}
			\le C(M),
			\]
			where $C(M)$ is independent of the particular graph
			and of $\varepsilon$; see \cite{Jo,Da}.
			
		\end{enumerate}
	For our purposes, it suffices to know that $C_{2+\sqrt{5}}$ is finite. No quantitative information on the dependence of $C_M$ on $M$ is needed.
	
 \subsection*{Outline of the proof of Theorem \ref{thm:main}}

We now explain the main idea of the proof.
Recall that $t_n$ is the bilinear form associated with
the $n$-th Calder\'on commutator.

For $\lambda\in\C\setminus[-1,1]$, consider the curve
$\gamma_\lambda(x)=\lambda x-A(x)$ and its associated
Cauchy kernel
\[
K_\lambda(x,y)
=\frac{1}{\gamma_\lambda(x)-\gamma_\lambda(y)}
=\frac{1}{(x-y)(\lambda-q(x,y))}.
\]
Since $|q(x,y)|\le1$, for $|\lambda|>1$ we have
\[
K_\lambda(x,y)
=\frac{1}{\lambda(x-y)}
+\sum_{n=1}^\infty
\frac{K_n(x,y)}{\lambda^{n+1}}.
\]
Thus the kernels of the Calder\'on commutators appear
as coefficients in the Laurent expansion of \(K_\lambda\) at infinity. For $f,g\in\Cc$, define
the corresponding bilinear forms
\[
r_\lambda(f,g)
:=\frac12\iint_{\R^2}
K_\lambda(x,y)
\bigl(f(y)g(x)-f(x)g(y)\bigr)\,dx\,dy.
\]

The proof has two main ingredients.

\begin{enumerate}

\item \emph{Geometric control of the Cauchy kernels.}

Since $\gamma_\lambda'(x)=\lambda-A'(x)$ almost
everywhere, its tangent vectors lie in the segment
$[\lambda-1,\lambda+1]$.
We shall show that $\gamma_\lambda(\R)$ is, after
a rotation, a Lipschitz graph with constant at most
\[
M(\lambda):=\tan\frac{\omega(\lambda)}2,
\]
where $\omega(\lambda)$ is the angle under which
the segment $[-1,1]$ is seen from $\lambda$.
The Coifman--McIntosh--Meyer theorem then gives
\[
|r_\lambda(f,g)|
\le
\frac{C_{M(\lambda)}}{d_\lambda}
\norm f_2\norm g_2,
\]
where $d_\lambda=\dist(\lambda,[-1,1])$.

\item \emph{Recovery of the commutators.}

We shall prove that, for any positively oriented piecewise \(C^1\) Jordan contour \(\Gamma\) enclosing \([-1,1]\),
\[t_n(f,g)
=\frac{1}{2\pi i}
\int_\Gamma\lambda^n r_\lambda(f,g)\,d\lambda.
\]
This identity follows from Fubini's theorem and
Cauchy's integral formula applied to
$\lambda^n/(\lambda-q(x,y))$.
Unlike the series expansion above, it remains valid
for contours entering the unit disc.

Combining this representation with the preceding
estimate, we obtain
\[
|t_n(f,g)|
\le
\frac{\norm f_2\norm g_2}{2\pi}
\int_\Gamma
\frac{|\lambda|^n C_{M(\lambda)}}{d_\lambda}
\,|d\lambda|.
\]

\end{enumerate}

It remains to construct, for each $n$, a contour
along which $M(\lambda)$ is uniformly bounded and
\[
\int_\Gamma
\frac{|\lambda|^n}{d_\lambda}\,|d\lambda|
\]
is bounded independently of $n$.
A circular contour gives an $O(\log n)$ bound.
We shall remove this logarithmic loss by constructing
contours that enter the unit disc while keeping
the associated Lipschitz constants uniformly bounded.

	\section{Curves with tangent in a cone}\label{sec:geometry}
In this section we establish two geometric lemmas that will allow us to represent the curves \(\gamma_\lambda(\mathbb R)\) as rotated Lipschitz graphs and control their Lipschitz constants in terms of \(\lambda\).	

	For $\theta\in\R$ and $\Theta\in[0,\pi)$, the \emph{closed sector of opening $\Theta$ and bisector $\theta$} is the set
	\[
	\mathcal S(\theta,\Theta)=\{re^{i\psi}:\ r\ge0,\ |\psi-\theta|\le\Theta/2\}.
	\]
	
	\begin{lemma}\label{lem:cone}
		Let $\gamma:\R\to\C$ be Lipschitz and suppose that there are $\theta\in\R$, $\Theta\in[0,\pi)$ and $m>0$ such that, almost everywhere,
		\[
		\gamma'(x)\in\mathcal S(\theta,\Theta)\qquad\text{and}\qquad|\gamma'(x)|\ge m .
		\]
		Then:
		\begin{enumerate}[label=(\roman*)]
			\item[i\,\normalfont{)}]  $|\gamma(x)-\gamma(y)|\ge m\cos(\Theta/2)\,|x-y|$; in particular $\gamma$ is injective.
			\item[ii\,\normalfont{)}] $\Gamma=\gamma(\R)$ is a rotated Lipschitz graph with constant $\le\tan(\Theta/2)$.
			\item[iii\,\normalfont{)}] For every non-negative measurable function $F$ on $\Gamma$, $$\displaystyle\int_\Gamma F\,d\sigma=\int_\R F(\gamma(x))\,|\gamma'(x)|\,dx.$$
		\end{enumerate}
	\end{lemma}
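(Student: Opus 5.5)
Rotating first, we may assume $\theta=0$: replace $\gamma$ by $e^{-i\theta}\gamma$. Rotations preserve distances and arclength, and by definition a rotated Lipschitz graph remains one after a further rotation. So assume $\gamma'(x)=|\gamma'(x)|e^{i\psi(x)}$ with $|\psi(x)|\le\Theta/2<\pi/2$ almost everywhere. Write $\gamma=u+iv$ with $u,v$ real and Lipschitz. Then almost everywhere
\[
u'(x)=|\gamma'(x)|\cos\psi(x)\ge m\cos(\Theta/2)>0
\qquad\text{and}\qquad
|v'(x)|\le\tan(\Theta/2)\,u'(x).
\]
Both inequalities follow directly from the sector condition.

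For (i), since $\gamma$ is Lipschitz it is absolutely continuous. For $x>y$,
\[
|\gamma(x)-\gamma(y)|\ge \operatorname{Re}(\gamma(x)-\gamma(y))=\int_y^x u'\ge m\cos(\Theta/2)(x-y).
\]
Injectivity follows.

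For (ii), the estimate above shows that $u$ is strictly increasing with $u(x)-u(y)\ge c(x-y)$, where $c=m\cos(\Theta/2)$. Hence $u(x)\to\pm\infty$ as $x\to\pm\infty$, and $u:\R\to\R$ is a bi-Lipschitz homeomorphism. Its inverse $u^{-1}$ is Lipschitz with constant $1/c$. Set $\varphi=v\circ u^{-1}$, so that $\Gamma=\{s+i\varphi(s):s\in\R\}$. For the Lipschitz bound, take $s_1<s_2$ with $s_j=u(x_j)$. Then
\[
|\varphi(s_2)-\varphi(s_1)|=\Bigl|\int_{x_1}^{x_2}v'\Bigr|\le\tan(\Theta/2)\int_{x_1}^{x_2}u'=\tan(\Theta/2)(s_2-s_1).
\]
Integrating over $x$ in this way avoids any chain-rule issues for compositions of Lipschitz maps.

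For (iii), parametrize $\Gamma$ by $s\mapsto s+i\varphi(s)$, so that $\int_\Gamma F\,d\sigma=\int_\R F(s+i\varphi(s))\sqrt{1+\varphi'(s)^2}\,ds$. Substitute $s=u(x)$. This change of variables is valid because $u$ is bi-Lipschitz and increasing; it can be justified via the area formula, or by the absolutely continuous change of variables for monotone maps. It gives
\[
\int_\R F(\gamma(x))\sqrt{1+\varphi'(u(x))^2}\,u'(x)\,dx .
\]
It remains to identify the Jacobian factor almost everywhere. Since $v=\varphi\circ u$ with $\varphi$ Lipschitz and $u$ bi-Lipschitz, the chain rule $v'(x)=\varphi'(u(x))u'(x)$ holds almost everywhere. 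Indeed, $u$ maps null sets to null sets and vice versa, so $\varphi$ is differentiable at $u(x)$ for almost every $x$. Therefore
\[
\sqrt{1+\varphi'(u)^2}\,u'=\sqrt{u'^2+v'^2}=|\gamma'| .
\]
Undoing the rotation preserves both sides of the identity.

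This measure-theoretic justification of the chain rule and the change of variables is the only delicate step. Everything else follows from the elementary inequalities $u'\ge m\cos(\Theta/2)$ and $|v'|\le\tan(\Theta/2)u'$.
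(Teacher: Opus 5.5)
Your proof is correct and follows the paper's argument essentially step for step. Like the paper, you write $e^{-i\theta}\gamma=u+iv$, derive $u'\ge m\cos(\Theta/2)$ and $|v'|\le\tan(\Theta/2)\,u'$, and integrate these to obtain (i) and the Lipschitz bound for $\varphi=v\circ u^{-1}$; for (iii) you change variables $s=u(x)$ and use the chain rule almost everywhere. Your extra justification of that chain rule, via the fact that the Lipschitz map $u^{-1}$ sends null sets to null sets, is a detail the paper leaves implicit.
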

	\begin{proof}
		Write $e^{-i\theta}\gamma=u+iv$. Then, almost everywhere,
		$\displaystyle
		u'\ge m\cos(\Theta/2)>0$ and $\displaystyle|v'|\le\tan(\Theta/2)\,u'$. Integrating the first inequality, $u(x)-u(y)\ge m\cos(\Theta/2)(x-y)$ for $x>y$. Hence $u$ is a strictly increasing continuous bijection from $\R$ onto $\R$, with Lipschitz inverse, and since $|\gamma(x)-\gamma(y)|\ge|u(x)-u(y)|$ we obtain \textit{i}). Let $\varphi=v\circ u^{-1}$. For $s=u(x)$ and $s'=u(x')$,
		\[
		|\varphi(s)-\varphi(s')|=\Bigl|\int_{x'}^{x}v'\Bigr|\le\tan(\Theta/2)\Bigl|\int_{x'}^{x}u'\Bigr|=\tan(\Theta/2)\,|s-s'| ,
		\]
		and $\Gamma=e^{i\theta}\{u(x)+iv(x)\}=e^{i\theta}\{s+i\varphi(s)\}$, which proves \textit{ii}). For \textit{iii}), recall that the arclength of the graph is $\sqrt{1+\varphi'(s)^2}\,ds$. Since $u$ is bi-Lipschitz, we may change variables $s=u(x)$, which turns it into
		\[
		\sqrt{1+\varphi'(u(x))^2}\;u'(x)\,dx .
		\]
		By the chain rule, almost everywhere $\varphi'(u(x))=v'(x)/u'(x)$, and then
		\[
		\sqrt{1+\varphi'(u(x))^2}\;u'(x)\,=\,\sqrt{u'(x)^2+v'(x)^2}=|\gamma'(x)| .\qedhere
		\]
	\end{proof}
	
	\begin{lemma}\label{lem:angle}
		Let $\lambda\in\C\setminus[-1,1]$. The vectors $\lambda-1$ and $\lambda+1$ are non-zero and not opposite. Let $\omega(\lambda)\in[0,\pi)$ be the angle between them, that is, the angle under which the segment $[-1,1]$ is seen from $\lambda$. Then:
		\begin{enumerate}[itemsep=0.75em]
			\item[i\,\normalfont{)}] the segment $\{\lambda-a:a\in[-1,1]\}$ is contained in a closed sector of opening $\omega(\lambda)$;
			\item[ii\,\normalfont{)}] $\cos\omega(\lambda)=\dfrac{|\lambda|^2-1}{|\lambda-1||\lambda+1|}$; in particular, $\omega(\lambda)<\pi/2$ if and only if $|\lambda|>1$;
			\item[iii\,\normalfont{)}] $\omega(\bar\lambda)=\omega(-\bar\lambda)=\omega(\lambda)$;
			\item[iv\,\normalfont{)}] if $\operatorname{Im}\lambda\ge0$, then $\omega(\lambda)=\arg(\lambda-1)-\arg(\lambda+1)$, with both arguments in $[0,\pi]$.
		\end{enumerate}
	\end{lemma}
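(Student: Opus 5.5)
We prove the four assertions in the order stated, after the preliminary claim. Since $\lambda\notin[-1,1]$, neither $\lambda-1$ nor $\lambda+1$ vanishes. Suppose they were opposite, that is, $\lambda-1=-t(\lambda+1)$ with $t>0$. Then $\lambda=(1-t)/(1+t)\in(-1,1)$, which is a contradiction. Hence the angle $\omega(\lambda)$ between them lies in $[0,\pi)$.

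For i), note that $\{\lambda-a:a\in[-1,1]\}$ is exactly the segment joining $\lambda+1$ and $\lambda-1$. Since these two vectors are non-zero and not opposite, there is a closed sector $\mathcal S(\theta,\omega(\lambda))$ whose boundary rays pass through them. Its bisector $\theta$ is the argument of $\frac{\lambda-1}{|\lambda-1|}+\frac{\lambda+1}{|\lambda+1|}$, and this vector is non-zero because the two vectors are not opposite. A closed sector of opening less than $\pi$ is convex, so it contains the whole segment between its two boundary points.

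For ii), we compute $\cos\omega(\lambda)=\operatorname{Re}\bigl((\lambda-1)\overline{(\lambda+1)}\bigr)/(|\lambda-1||\lambda+1|)$. Expanding the product gives
\[
(\lambda-1)(\bar\lambda+1)=|\lambda|^2-1+\lambda-\bar\lambda,
\]
and the term $\lambda-\bar\lambda$ is purely imaginary. Taking real parts yields the stated formula. Since $\omega(\lambda)\in[0,\pi)$, the condition $\omega(\lambda)<\pi/2$ is equivalent to $\cos\omega(\lambda)>0$, which in turn is equivalent to $|\lambda|>1$.

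For iii), observe that the formula in ii) involves only $|\lambda|$ and the product $|\lambda-1||\lambda+1|$. Replacing $\lambda$ by $\bar\lambda$ leaves $|\lambda\pm1|$ unchanged. Replacing $\lambda$ by $-\bar\lambda$ swaps $|\lambda-1|$ and $|\lambda+1|$. In both cases the value of $\cos\omega$ is unchanged, and $\cos$ is injective on $[0,\pi)$, so $\omega$ is unchanged.

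For iv), suppose $\operatorname{Im}\lambda\ge0$. The case $\operatorname{Im}\lambda=0$ means $\lambda$ is real with $|\lambda|>1$. Then both vectors point in the same real direction, $\omega=0$, and both arguments equal $0$ or both equal $\pi$, so the formula holds.

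Now suppose $\operatorname{Im}\lambda>0$. Then $\lambda\pm1$ both lie in the open upper half-plane, so their arguments lie in $(0,\pi)$. The angle between two vectors in the upper half-plane equals the absolute difference of their arguments, since this difference lies in $(-\pi,\pi)$. It remains to see that $\arg(\lambda-1)\ge\arg(\lambda+1)$. Both vectors have the same imaginary part $\operatorname{Im}\lambda>0$, and the real part of $\lambda-1$ is smaller. Since $\cot\arg z=\operatorname{Re}z/\operatorname{Im}z$ and $\cot$ is decreasing on $(0,\pi)$, the vector $\lambda-1$ has the larger argument.

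The only delicate point is the sign in iv), which the cotangent monotonicity argument resolves. Every other step is a direct computation.
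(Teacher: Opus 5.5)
Your proof is correct and follows essentially the same route as the paper for the non-opposite claim and parts i), ii) and iv); your cotangent argument in iv) simply makes explicit the paper's remark that $\lambda-1=(\lambda+1)-2$ has the larger argument. The only variation is in iii), where you derive the invariance from the cosine formula of ii) together with the injectivity of $\cos$ on $[0,\pi)$, whereas the paper notes that $\lambda\mapsto\bar\lambda$ and $\lambda\mapsto-\bar\lambda$ are isometries preserving $[-1,1]$; both arguments are valid.
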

	\begin{proof}
		If $\lambda-1$ and $\lambda+1$ were opposite, $\lambda+1=-c(\lambda-1)$ with $c>0$, solving gives $\lambda=\frac{c-1}{c+1}\in(-1,1)$, which is impossible. So the vectors are not opposite, and they are clearly non-zero.
		
		Regarding \textit{i}), observe that the closed sector generated by two vectors forming an angle less than $\pi$ is convex and has opening equal to that angle. Moreover $$\lambda-a=\frac{1+a}{2}(\lambda-1)+\frac{1-a}{2}(\lambda+1)$$ is a convex combination of the two, so \textit{i}) follows. 
		
		For \textit{ii}), recall that if $\alpha$ denotes the angle between two complex numbers $z_1,z_2$, then
		\[
		\cos{\alpha} = \frac{\operatorname{Re}(z_1\overline{z_2})}{|z_1||z_2|}.
		\]
		Therefore, since $\operatorname{Re}\bigl((\lambda-1)\overline{(\lambda+1)}\bigr)=|\lambda|^2-1$, we are done. 
		
		Note also that the maps $\lambda\mapsto\bar\lambda$ and $\lambda\mapsto-\bar\lambda$ are isometries that leave the set $[-1,1]$ invariant (the second one swaps its endpoints), so they preserve the angle under which the segment is seen and thus \textit{iii}) follows. 
		
		Finally, if $\operatorname{Im}\lambda>0$, both vectors lie in the open upper half-plane. Since $\lambda-1=(\lambda+1)-2$, the vector $\lambda-1$ has the larger argument, and the angle between them is the difference of the arguments. If \(\lambda\) is real and \(|\lambda|>1\), both arguments are equal, and the identity follows.
	\end{proof}

\section{Cauchy kernels and $L^2$ estimates}\label{sec:family}

We now turn to the Cauchy kernels $K_\lambda$ and the bilinear forms $r_\lambda$ introduced in the outline. The geometric results of the preceding section will allow us to obtain $L^2$ estimates using the Coifman--McIntosh--Meyer theorem.

For $\lambda\in\C\setminus[-1,1]$, the kernel
$K_\lambda$ is antisymmetric and satisfies
\begin{equation}\label{eq:boundK}
	|K_\lambda(x,y)|
	\le\frac{1}{d_\lambda|x-y|},
	\qquad x\ne y,
\end{equation}
where $d_\lambda=\dist(\lambda,[-1,1])$.



%
\begin{proposition}\label{prop:rlambda}
	For every $\lambda\in\C\setminus[-1,1]$ and
	$f,g\in\Cc$,
	\[
	|r_\lambda(f,g)|
	\le\frac{C_{M(\lambda)}}{d_\lambda}
	\norm f_2\norm g_2.
	\]
\end{proposition}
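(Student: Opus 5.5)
The plan is to realise $r_\lambda(f,g)$ as a truncated Cauchy bilinear form on the curve $\Gamma=\gamma_\lambda(\R)$ and then invoke Theorem~\ref{thm:CMM} together with Lemma~\ref{lem:cone}.

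\emph{Geometry of $\Gamma$.} Since $\gamma_\lambda'(x)=\lambda-A'(x)$ with $A'(x)\in[-1,1]$ a.e., Lemma~\ref{lem:angle}~i) shows that $\gamma_\lambda'(x)$ lies in a closed sector $\mathcal S(\theta,\omega(\lambda))$ with $\omega(\lambda)<\pi$. Moreover $|\gamma_\lambda'(x)|\ge d_\lambda>0$. Lemma~\ref{lem:cone} therefore applies with $m=d_\lambda$ and $\Theta=\omega(\lambda)$. It gives that $\gamma_\lambda$ is injective, and that $\Gamma$ is a rotated Lipschitz graph with constant $\le M(\lambda)$. It also gives the change of variables $d\sigma=|\gamma_\lambda'(x)|\,dx$.

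\emph{Transfer of $f,g$.} Define functions on $\Gamma$ by
\[
F(\gamma_\lambda(y))=\frac{f(y)}{\gamma_\lambda'(y)},\qquad G(\gamma_\lambda(x))=\frac{g(x)}{|\gamma_\lambda'(x)|}.
\]
Using Lemma~\ref{lem:cone}~iii),
\[
\norm F_{L^2(\sigma)}^2=\int_\R\frac{|f|^2}{|\gamma_\lambda'|}\,dy\le\frac{\norm f_2^2}{d_\lambda},
\]
and likewise $\norm G_{L^2(\sigma)}^2\le\norm g_2^2/d_\lambda$. For each $\varepsilon>0$, the truncated pairing is
\[
\int_\Gamma \mathcal C_{\Gamma,\varepsilon}F\,G\,d\sigma=\iint_{|\gamma_\lambda(x)-\gamma_\lambda(y)|>\varepsilon}K_\lambda(x,y)\,f(y)g(x)\,dx\,dy,
\]
because $F\,d\sigma(w)=f(y)\,\overline{\tau}\,dy$ in the parametrisation. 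Here a small care point arises: $\gamma_\lambda'/|\gamma_\lambda'|$ appears. One handles it by putting the unimodular factors into $F$ and $G$ so that they cancel exactly, which is what the choice above accomplishes. By Definition~\ref{def:CM}, the absolute value of this pairing is at most
\[
C_{M(\lambda)}\norm F\norm G\le \frac{C_{M(\lambda)}}{d_\lambda}\norm f_2\norm g_2.
\]

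\emph{Passage to the limit.} Apply Lemma~\ref{lem:weak} with $K=K_\lambda$, which is antisymmetric and satisfies the bound \eqref{eq:boundK} with $B=1/d_\lambda$. Take $\rho(x,y)=|\gamma_\lambda(x)-\gamma_\lambda(y)|$. This $\rho$ is symmetric, satisfies $\rho\le(|\lambda|+1)|x-y|$, and is positive off the diagonal by injectivity, i.e.\ by Lemma~\ref{lem:cone}~i). The lemma gives that the truncated integrals converge to $r_\lambda(f,g)$ as $\varepsilon\to0^+$. The uniform bound then passes to the limit, proving the proposition.

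The main obstacle is the bookkeeping in the transfer step. One must check that the unimodular factors cancel exactly and that the truncation region $\{|z-w|>\varepsilon\}$ on $\Gamma$ matches $\{\rho>\varepsilon\}$. It is precisely this matching that dictates the choice of $\rho$ in Lemma~\ref{lem:weak} rather than $|x-y|$.
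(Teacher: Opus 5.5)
Your plan is the paper's: use Lemma~\ref{lem:angle}(i) and Lemma~\ref{lem:cone} to see $\gamma_\lambda(\R)$ as a rotated Lipschitz graph with constant $\le M(\lambda)$, transfer the pairing to $\Gamma$, bound it by $C_{M(\lambda)}$, and pass to the limit with Lemma~\ref{lem:weak} using $\rho(x,y)=|\gamma_\lambda(x)-\gamma_\lambda(y)|$. However, the transfer step, which you yourself single out, is wrong as written. The operator $\mathcal C_{\Gamma,\varepsilon}$ in Definition~\ref{def:CM} is taken with respect to arclength $d\sigma$, not $dw$. With your choice $F(\gamma_\lambda(y))=f(y)/\gamma_\lambda'(y)$ you get, as you note, $F\,d\sigma(w)=f(y)\overline{\tau(y)}\,dy$ with $\tau=\gamma_\lambda'/|\gamma_\lambda'|$. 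Meanwhile $G\,d\sigma(z)=g(x)\,dx$. Hence
\[
\int_\Gamma \mathcal C_{\Gamma,\varepsilon}F\,G\,d\sigma=\iint_{\rho(x,y)>\varepsilon}K_\lambda(x,y)\,f(y)\overline{\tau(y)}\,g(x)\,dx\,dy ,
\]
which is not the truncation of $r_\lambda(f,g)$. Your proposed remedy of placing unimodular factors in $F$ and $G$ so that they cancel cannot work either. A factor put into $F$ is a function $u(y)$, and one put into $G$ is a function $v(x)$. The identity $u(y)v(x)\equiv1$ forces both to be constant, whereas $\tau(y)=(\lambda-A'(y))/|\lambda-A'(y)|$ is in general non-constant for non-real $\lambda$.

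The fix is that no unimodular factor should appear at all. Take $F(\gamma_\lambda(y))=f(y)/|\gamma_\lambda'(y)|$, exactly like $G$; this is what the paper does. Then Lemma~\ref{lem:cone}(iii) gives $F\,d\sigma=f\,dy$ and $G\,d\sigma=g\,dx$, so the displayed identity holds with $f(y)g(x)$. Your norm computation $\norm F_{L^2(\sigma)}^2=\int|f|^2/|\gamma_\lambda'|\le\norm f_2^2/d_\lambda$ is unchanged, since $|F|$ is the same. Your choice $f/\gamma_\lambda'$ would be the right one for the Cauchy operator with respect to the complex measure $dw$, as in the original Coifman--McIntosh--Meyer formulation; compare item (a) after Theorem~\ref{thm:CMM}. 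With this correction, the rest of your argument coincides with the paper's proof. That includes matching $\{|z-w|>\varepsilon\}$ with $\{\rho>\varepsilon\}$ and taking the limit via Lemma~\ref{lem:weak}.
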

	
	\begin{proof}
		Write $\gamma=\gamma_\lambda$ and
		$J(x)=|\gamma'(x)|=|\lambda-A'(x)|$.
		Since $A'(x)\in[-1,1]$ almost everywhere,
		we have $J(x)\ge d_\lambda$ almost everywhere.
		
		By Lemma~\ref{lem:angle}(i), the tangent vectors
		$\gamma'(x)$ lie in a closed sector of opening
		$\omega(\lambda)$. Lemma~\ref{lem:cone}, applied
		with $\Theta=\omega(\lambda)$ and $m=d_\lambda$,
		shows that $\Gamma:=\gamma(\R)$ is a rotated
		Lipschitz graph with constant at most $M(\lambda)$
		and that $\gamma$ is injective.
		
		Set
		\[
		\rho(x,y)=|\gamma(x)-\gamma(y)|.
		\]
		Since $\gamma$ is injective, $\rho$ is symmetric,
		vanishes on the diagonal, and is strictly positive
		for $x\ne y$. Moreover, since $\gamma$ is Lipschitz,
		\[
		\rho(x,y)\le (|\lambda|+1)|x-y|.
		\]
		Thus Lemma~\ref{lem:weak} gives
		\[
		r_\lambda(f,g)
		=
		\lim_{\varepsilon\to0^+}
		\iint_{\rho(x,y)>\varepsilon}
		\frac{f(y)g(x)}{\gamma(x)-\gamma(y)}
		\,dx\,dy.
		\]
		
		Define $F$ and $G$ on $\Gamma$, almost everywhere
		with respect to $\sigma$, by
		\[
		F(\gamma(x))=\frac{f(x)}{J(x)},
		\qquad
		G(\gamma(x))=\frac{g(x)}{J(x)}.
		\]
		By Lemma~\ref{lem:cone}(iii), the change of
		variables $z=\gamma(x)$ and $w=\gamma(y)$ yields
		\[\iint_{\rho(x,y)>\varepsilon}
			\frac{f(y)g(x)}{\gamma(x)-\gamma(y)}
			\,dx\,dy=
			\int_\Gamma G(z)
			\left(
			\int_{|z-w|>\varepsilon}
			\frac{F(w)}{z-w}\,d\sigma(w)
			\right)d\sigma(z)=
			\int_\Gamma G(z)\,
			\mathcal C_{\Gamma,\varepsilon}F(z)\,d\sigma(z).
		\]
		Fubini's theorem is justified by the truncation
		and the compact supports of $f$ and $g$.
		
		By Cauchy--Schwarz and Definition~\ref{def:CM},
		\[
		\left|
		\int_\Gamma G\,
		\mathcal C_{\Gamma,\varepsilon}F\,d\sigma
		\right|
		\le
		C_{M(\lambda)}
		\norm F_{L^2(\sigma)}
		\norm G_{L^2(\sigma)}.
		\]
		Moreover,
		\[
		\norm F_{L^2(\sigma)}^2
		=\int_\R\frac{|f(x)|^2}{J(x)}\,dx
		\le\frac{\norm f_2^2}{d_\lambda},
		\]
		and similarly
		\[
		\norm G_{L^2(\sigma)}^2
		\le\frac{\norm g_2^2}{d_\lambda}.
		\]
		Letting $\varepsilon\to0^+$ completes the proof.
	\end{proof}

	\section{Recovering the commutators by contour integration}\label{sec:extraction}
We now establish the contour integral representation of $t_n$ in terms of $r_\lambda$ announced in the outline of the proof.	
	\begin{lemma}\label{lem:coef}
		Let $\Gamma$ be a piecewise $C^1$ Jordan curve contained in $\C\setminus[-1,1]$, positively oriented, with $[-1,1]$ in its interior. Then, for $f,g\in\Cc$,
		\[
		t_n(f,g)=\frac{1}{2\pi i}\int_\Gamma\lambda^n\,r_\lambda(f,g)\,d\lambda .
		\]
	\end{lemma}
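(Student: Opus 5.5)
We prove the identity by writing both sides as absolutely convergent double integrals over $\R^2$ and exchanging the $\lambda$-integral with the $(x,y)$-integral. Write $D(x,y)=f(y)g(x)-f(x)g(y)$. By \eqref{eq:tn} and the definition of $r_\lambda$,
\[
t_n(f,g)=\frac12\iint_{\R^2}\frac{q(x,y)^n}{x-y}\,D(x,y)\,dx\,dy,\qquad
r_\lambda(f,g)=\frac12\iint_{\R^2}\frac{D(x,y)}{(x-y)(\lambda-q(x,y))}\,dx\,dy.
\]
The pointwise input is Cauchy's integral formula. For each $x\neq y$ the number $q=q(x,y)$ lies in $[-1,1]$, hence in the interior of $\Gamma$. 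The function $\lambda\mapsto\lambda^n$ is entire, so
\[
\frac{1}{2\pi i}\int_\Gamma\frac{\lambda^n}{\lambda-q(x,y)}\,d\lambda=q(x,y)^n .
\]
For a piecewise $C^1$ Jordan curve we use the general Cauchy formula with winding number $1$ at interior points; positive orientation and $[-1,1]$ lying in the interior give exactly this.

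Next we justify Fubini on $\Gamma\times\R^2$ with the measure $|d\lambda|\,dx\,dy$. Since $\Gamma$ is compact and disjoint from $[-1,1]$, we have
\[
\delta:=\min_{\lambda\in\Gamma}d_\lambda>0,\qquad R:=\max_{\lambda\in\Gamma}|\lambda|<\infty .
\]
Using $|\lambda-q(x,y)|\ge d_\lambda\ge\delta$ together with \eqref{eq:D}, the integrand is bounded by
\[
\left|\frac{\lambda^n D(x,y)}{(x-y)(\lambda-q(x,y))}\right|\le \frac{R^n c_{f,g}}{\delta}\,\mathbf 1_{S\times S}(x,y).
\]
This bound is integrable on $\Gamma\times\R^2$, because $\Gamma$ has finite length and $S\times S$ has finite measure. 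Measurability is clear, since the integrand is continuous in $\lambda$ and measurable in $(x,y)$.

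With Fubini justified, we compute
\[
\frac{1}{2\pi i}\int_\Gamma\lambda^n r_\lambda(f,g)\,d\lambda
=\frac12\iint_{\R^2}\frac{D(x,y)}{x-y}\left(\frac{1}{2\pi i}\int_\Gamma\frac{\lambda^n\,d\lambda}{\lambda-q(x,y)}\right)dx\,dy
=t_n(f,g).
\]
There is no real obstacle. The only point needing care is the uniform lower bound $|\lambda-q|\ge\delta$ on the compact contour, which is what makes the domination work. This holds even though $\Gamma$ may enter the unit disc, and it is precisely why the representation, unlike the Laurent series, does not require $|\lambda|>1$.
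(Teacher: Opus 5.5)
Your proposal is correct and is essentially the paper's proof. Both arguments dominate the integrand on $\Gamma\times\R^2$ by a constant multiple of $\mathbf 1_{S\times S}$, using $\dist(\Gamma,[-1,1])>0$ and \eqref{eq:D}, then apply Fubini and Cauchy's formula pointwise at $q(x,y)\in[-1,1]$.
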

	\begin{proof}
		Let $d_\Gamma=\dist(\Gamma,[-1,1])$, which is positive by compactness. The function
		\[
		\Phi(\lambda,x,y)=\lambda^n\,\frac{f(y)g(x)-f(x)g(y)}{2(x-y)(\lambda-q(x,y))}
		\]
		satisfies $|\Phi|\le|\lambda|^n\,c_{f,g}(2d_\Gamma)^{-1}\mathbf 1_{S\times S}(x,y)$, which is integrable with respect to $|d\lambda|\,dx\,dy$ on $\Gamma\times\R^2$. Fubini allows us to interchange the integrals. For fixed $x\ne y$, the number $q=q(x,y)$ lies inside $\Gamma$, and the Cauchy integral formula gives 
		\[
		\frac{1}{2\pi i}\int_\Gamma\frac{\lambda^n}{\lambda-q}\,d\lambda=q^n.
		\]
		Therefore
		\[
		\frac1{2\pi i}\int_\Gamma\lambda^nr_\lambda(f,g)\,d\lambda=\frac12\iint\frac{q(x,y)^n}{x-y}\,(f(y)g(x)-f(x)g(y))\,dx\,dy=t_n(f,g).\qedhere
		\]
	\end{proof}
	 For a compact curve
	$\Gamma\subset\C\setminus[-1,1]$, set
	$$\displaystyle
	M_\Gamma:=\sup_{\lambda\in\Gamma}M(\lambda).
	$$ Combining Lemma~\ref{lem:coef}, Proposition~\ref{prop:rlambda},
	and the monotonicity of $C_M$, we obtain the following
	estimate.
	
	\begin{corollary}\label{cor:principle}
	Let $\Gamma$ be a piecewise $C^1$ Jordan curve contained in $\C\setminus[-1,1]$, positively oriented, with $[-1,1]$ in its interior. Then
		\[
		\norm{T_{A,n}}
		\le\frac{C_{M_\Gamma}}{2\pi}
		\int_\Gamma\frac{|\lambda|^n}{d_\lambda}
		\,|d\lambda|.
		\]
	\end{corollary}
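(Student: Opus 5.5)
The corollary follows from the density reduction of Section~\ref{sec2} together with Lemma~\ref{lem:coef} and Proposition~\ref{prop:rlambda}. By Section~\ref{sec2}, it suffices to show that for all $f,g\in\Cc$
\[
|t_n(f,g)|\le \frac{C_{M_\Gamma}}{2\pi}\Bigl(\int_\Gamma\frac{|\lambda|^n}{d_\lambda}\,|d\lambda|\Bigr)\norm f_2\norm g_2 .
\]
Indeed, $\ip{T_{A,n}f}{g}=t_n(f,g)$ on $\Cc$, and the operator norm is a supremum over a dense class.

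Fix $f,g\in\Cc$. Lemma~\ref{lem:coef} writes $t_n(f,g)$ as $\frac1{2\pi i}\int_\Gamma\lambda^n r_\lambda(f,g)\,d\lambda$. Moving the absolute value inside the integral gives
\[
|t_n(f,g)|\le\frac1{2\pi}\int_\Gamma|\lambda|^n|r_\lambda(f,g)|\,|d\lambda|.
\]
At each point $\lambda\in\Gamma$, Proposition~\ref{prop:rlambda} bounds the integrand by $|\lambda|^n C_{M(\lambda)}d_\lambda^{-1}\norm f_2\norm g_2$.

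Since $M(\lambda)\le M_\Gamma$ on $\Gamma$ and $M\mapsto C_M$ is non-decreasing (Definition~\ref{def:CM}), we have $C_{M(\lambda)}\le C_{M_\Gamma}$. Pulling this constant outside the integral gives the displayed estimate.

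Some minor points need checking.

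\emph{Measurability of the integrand.} The integral in Lemma~\ref{lem:coef} is already defined, since Fubini was applied there. Alternatively, $\lambda\mapsto r_\lambda(f,g)$ is continuous on $\Gamma$ by dominated convergence, using the bound \eqref{eq:boundK} with $d_\lambda\ge d_\Gamma>0$ together with \eqref{eq:D}.

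\emph{Finiteness of $M_\Gamma$.} The bound is only meaningful when $M_\Gamma<\infty$. This holds because $\omega$ is continuous on the compact set $\Gamma\subset\C\setminus[-1,1]$ and takes values in $[0,\pi)$, so its supremum is strictly less than $\pi$. Hence $M_\Gamma=\tan(\sup\omega/2)$ is finite and $C_{M_\Gamma}$ is finite by Theorem~\ref{thm:CMM}. In any case, if $M_\Gamma=\infty$ the inequality is vacuous.

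\emph{Main obstacle.} There is no real difficulty here. The only care needed is justifying the pointwise use of the bound and the monotonicity step, both of which are immediate.
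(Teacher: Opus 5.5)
Your proposal is correct and follows the paper's own route: the paper obtains the corollary by combining the identification $\ip{T_{A,n}f}{g}=t_n(f,g)$ on the dense class $\Cc$ with Lemma~\ref{lem:coef}, the pointwise bound of Proposition~\ref{prop:rlambda}, and the monotonicity of $C_M$. Your additional checks are valid and fill in details the paper leaves implicit, namely the continuity of $\lambda\mapsto r_\lambda(f,g)$ and the finiteness of $M_\Gamma$ via continuity of $\omega$ on the compact set $\Gamma$.
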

	
	Hence, to obtain a bound independent of $n$, it suffices to construct, for each $n$, a contour $\Gamma$ such that both $M_{\Gamma}$ and
$\int_{\Gamma}\frac{|\lambda|^n}{d_\lambda}|d\lambda|$
are uniformly bounded in $n$.

	\section{Proof of Theorem \ref{thm:main}}\label{sec:contour}
	As a first attempt, let us apply Corollary~\ref{cor:principle}
	to a circular contour and see what it yields.
	
	Fix $n\ge1$ and consider
	$\Gamma=\{|\lambda|=\rho\}$, where $\rho=1+1/n$.
	Since $|\lambda|>1$, Lemma~\ref{lem:angle}(ii)
	gives $\omega(\lambda)<\pi/2$. Thus
	$M(\lambda)\le1$ and $M_\Gamma\le1$.
	
	Since $[-1,1]$ is contained in the closed unit disc,
	\[
	d_\lambda\ge\rho-1=\frac1n.
	\]
	Moreover, as $[-1,1]\subset\R$, writing
	$\lambda=\rho e^{i\theta}$ gives
	\[
	d_\lambda\ge|\operatorname{Im}\lambda|
	=\rho|\sin\theta|\ge|\sin\theta|.
	\]
	Using $\sin\theta\ge2\theta/\pi$ on $[0,\pi/2]$
	and symmetry, we obtain
	\[
	\begin{aligned}
		\int_0^{2\pi}
		\frac{d\theta}{\max(1/n,|\sin\theta|)}
		&\le
		4\left(
		\int_0^{\pi/(2n)}n\,d\theta
		+
		\int_{\pi/(2n)}^{\pi/2}
		\frac{\pi}{2\theta}\,d\theta
		\right)
		&=2\pi(1+\log n).
	\end{aligned}
	\]
	Since
	\[
	|\lambda|^n
	=\left(1+\frac1n\right)^n\le e,
	\qquad
	|d\lambda|=\rho\,d\theta\le2\,d\theta,
	\]
	we conclude that
	\[
	\int_\Gamma
	\frac{|\lambda|^n}{d_\lambda}\,|d\lambda|
	\le4\pi e(1+\log n).
	\]
	Corollary~\ref{cor:principle} therefore yields
	\[
	\norm{T_{A,n}}\le2e\,C_1(1+\log n).
	\]
	
	This already improves the linear bound obtained
	in \cite{HMP}, but does not provide an estimate
	independent of $n$.
	
	On the circular contour, $|\lambda|^n$ does not
	decay, and the integral of $1/d_\lambda$ near
	$\pm1$ produces the logarithmic loss.
	To eliminate it, we need contours that enter
	the unit disc, where $|\lambda|^n$ decays.
	However, if $\lambda$ approaches $[-1,1]$
	tangentially, then $\omega(\lambda)$ may approach
	$\pi$, causing $M(\lambda)$ to become unbounded.
	The idea is therefore to approach the endpoints
	$\pm1$ at a fixed angle to the real axis. Along such segments, the distance to $[-1,1]$ is comparable to the distance $s$ from the endpoint, while $|\lambda|^n$ decays exponentially in $ns$. This decay compensates for the factor $1/d_\lambda$ in the contour integral.

	For convenience, we fix $\varphi=\arctan(1/2)\in(0,\pi/4)$ so that
	\[
	\sin{\varphi} = \frac{1}{\sqrt{5}}, \qquad \cos\varphi=\frac{2}{\sqrt5}, \qquad \tan{\varphi} = \frac{1}{2}.
	\]
	This way, 
	\[
	 M_0:=\cot\frac\varphi2 = \frac{1+\cos{\varphi}}{\sin{\varphi}} = 2+\sqrt{5}.
	\]
	For $0<\delta\le\frac12$ we set $s_\delta=\frac{2\delta}{\sqrt5}$.
	
	\begin{definition}[The contour]\label{def:contour}
		For $0<\delta\le1/2$, let $\Gamma_\delta$ be the positively oriented closed curve obtained by joining the following six pieces in the indicated order:
		\begin{enumerate}[itemsep=0.5em]
			\item right arc $\mathcal A_R$: $\lambda=1+\delta e^{it}$, with $t$ from $-(\pi-\varphi)$ to $\pi-\varphi$;
			\item side $\Sigma_R^+$: $\lambda=1-s+\frac i2 s$, with $s$ from $s_\delta$ to $1$, going from $p_R^+$ to $\frac i2$;
			\item side $\Sigma_L^+$: $\lambda=-1+s+\frac i2 s$, with $s$ from $1$ to $s_\delta$, going from $\frac i2$ to $p_L^+$;
			\item left arc $\mathcal A_L$: $\lambda=-1+\delta e^{it}$, with $t$ from $\varphi$ to $2\pi-\varphi$;
			\item side $\Sigma_L^-$: $\lambda=-1+s-\frac i2 s$, with $s$ from $s_\delta$ to $1$, going from $p_L^-$ to $-\frac i2$;
			\item side $\Sigma_R^-$: $\lambda=1-s-\frac i2 s$, with $s$ from $1$ to $s_\delta$, going from $-\frac i2$ to $p_R^-$;
		\end{enumerate}
		where $p_R^\pm=1-s_\delta\pm\frac i2s_\delta$ and $p_L^\pm=-1+s_\delta\pm\frac i2 s_\delta$.
	\end{definition}
	
The pieces join continuously. For instance,
$\displaystyle
1+\delta e^{i(\pi-\varphi)}
=1-\frac{2\delta}{\sqrt5}
+i\frac{\delta}{\sqrt5}
=p_R^+.
$
Similarly, $-1+\delta e^{i\varphi}=p_L^+$, while the two upper sides meet at $i/2$. The remaining junctions follow by conjugation. The resulting contour is illustrated in Figure~\ref{fig:contour}.
	\begin{figure}[H]
    \centering
    \begin{tikzpicture}[scale=2.7,>=stealth]
			\draw[->,gray] (-1.55,0) -- (1.6,0) node[right]{$\operatorname{Re}\lambda$};
			\draw[->,gray] (0,-0.75) -- (0,0.8) node[above]{$\operatorname{Im}\lambda$};
			\draw[gray!60,dotted] (0,0) circle (1);
			\draw[red!60,dashed] (1,0) arc[start angle=63.435,end angle=116.565,radius=2.2361];
			\draw[red!60,dashed] (1,0) arc[start angle=-63.435,end angle=-116.565,radius=2.2361];
			\draw[very thick] (-1,0) -- (1,0);
			\fill (1,0) circle (0.018) node[below right=-1pt]{\small $1$};
			\fill (-1,0) circle (0.018) node[below left=-1pt]{\small $-1$};
			\draw[thick,blue!70!black,
			postaction={decorate,decoration={markings,
					mark=at position 0.07 with {\arrow{>}},
					mark=at position 0.30 with {\arrow{>}},
					mark=at position 0.57 with {\arrow{>}},
					mark=at position 0.80 with {\arrow{>}}}}]
			(0.7764,-0.1118) arc[start angle=-153.435,end angle=153.435,radius=0.25]
			-- (0,0.5) -- (-0.7764,0.1118)
			arc[start angle=26.565,end angle=333.435,radius=0.25]
			-- (0,-0.5) -- cycle;
			\fill (0,0.5) circle (0.015) node[above right]{\small $\tfrac i2$};
			\fill (0,-0.5) circle (0.015) node[below right]{\small $-\tfrac i2$};
			\draw[<->] (1,0) -- ($(1,0)+(60:0.25)$) node[midway,above left=-2pt]{\scriptsize $\delta$};
			\node[blue!70!black] at (0.55,0.45) {$\Gamma_\delta$};
			\draw[red!60,->] (-0.75,0.55) node[above,font=\scriptsize]{$\omega(\lambda)=\pi-\varphi$} -- (-0.4,0.21);
		\end{tikzpicture}
		\caption{The contour $\Gamma_\delta$. The straight sides lie inside the unit disc (dotted), and their supporting lines are tangent at $\pm1$ to the boundary of the lens (dashed), where $\omega(\lambda)=\pi-\varphi$. Inside the lens, $\omega(\lambda)>\pi-\varphi$.}
		\label{fig:contour}
	\end{figure}	
\begin{lemma}\label{lem:jordan}
	$\Gamma_\delta$ is a positively oriented Jordan curve
	contained in $\C\setminus[-1,1]$, with $[-1,1]$
	in its interior.
\end{lemma}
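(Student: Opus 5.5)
The plan is to show that $\Gamma_\delta$ is the boundary of an explicit region and then to read off its orientation. We exploit the symmetry of the construction under $\lambda\mapsto\bar\lambda$ and $\lambda\mapsto-\bar\lambda$. Define the closed region
\[
D_\delta=\Bigl(\{|\operatorname{Im}\lambda|\le \tfrac12(1-|\operatorname{Re}\lambda|)\}\cup\{|\lambda-1|\le\delta\}\cup\{|\lambda+1|\le\delta\}\Bigr).
\]
This is the closed rhombus $R$ with vertices $\pm1,\pm\frac i2$ together with two closed discs centred at its horizontal vertices. The goal is to prove that $\Gamma_\delta=\partial D_\delta$, that this boundary is a simple closed curve, and that $[-1,1]$ lies in the interior of $D_\delta$.

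\textbf{Step 1: the pieces lie on $\partial D_\delta$.} The sides of $R$ lie on the four lines $\operatorname{Im}\lambda=\pm\frac12(1\mp\operatorname{Re}\lambda)$. Each of these lines makes angle $\varphi$ with the real axis, and $\tan\varphi=\frac12$. The disc $|\lambda-1|\le\delta$ meets the side from $1$ to $\frac i2$ exactly at distance $\delta$ from $1$, which is the point $p_R^+$, since $|1-p_R^+|=s_\delta\sqrt{5}/2=\delta$. The part of the disc's circle lying outside $R$ is the arc $|t|\le\pi-\varphi$: indeed, the interior angle of $R$ at $1$ is $2\varphi$, centred on the direction $\pi$. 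Points on this arc are outside $R$ or on its boundary, and they are outside the other disc because $\delta\le\frac12$ keeps the discs disjoint, as $2\delta\le1<2$. The side segments with $s\in[s_\delta,1]$ lie on $\partial R$ and outside both open discs, since their distance to $\pm1$ is at least $\delta$; this uses $s\ge s_\delta$ and, for the far vertex, the bound $|\lambda\mp1|\ge$ the distance to the other end, which exceeds $\delta$. Hence each of the six pieces lies in $\partial D_\delta$. Conversely, every boundary point of $D_\delta$ is either on $\partial R$ outside the open discs or on a circle outside the interior of $R$, so the six pieces exhaust $\partial D_\delta$.

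\textbf{Step 2: the curve is simple.} The six pieces meet only at the six listed junction points. This follows from Step 1 together with the observation that, in the parameters, each piece is injective and the pieces occupy disjoint angular sectors as seen from the origin, apart from their endpoints. In fact $D_\delta$ is star-shaped with respect to $0$: $R$ is convex, and each disc union $R$ is star-shaped from $0$ because the segment from $0$ to any point of $|\lambda-1|\le\delta$ stays in $R\cup$ the disc. One can check this by noting that the tangent lines from $0$ to the disc are steeper than the sides only when $\delta$ is small relative to $\varphi$, or more simply by using convexity of $R\cup\{|\lambda-1|\le\delta\}$ near $1$. The main obstacle is this star-shapedness claim, that the boundary is a radial graph $\lambda=r(\theta)e^{i\theta}$ with $r$ continuous. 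I expect to verify it by checking that every ray from $0$ crosses each piece at most once, using that $\arg\lambda$ is strictly monotone along each piece in the given direction. On the arcs, monotonicity follows because $|\delta e^{it}|<1$ about centre $1$. On the segments, it follows because they do not pass through $0$.

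\textbf{Step 3: orientation and the interior.} Once $\Gamma_\delta$ is a radial graph traversed with $\arg\lambda$ increasing, it is a Jordan curve winding once, positively, around $0$. Its interior is $\operatorname{int}D_\delta$. Finally, $[-1,1]\subset\operatorname{int}D_\delta$: the points $\pm1$ are centres of the discs, and points of $(-1,1)$ lie in the interior of $R$ because $0<\frac12(1-|x|)$. Since $[-1,1]$ is contained in the open set $\operatorname{int}D_\delta$, it follows that $\Gamma_\delta\subset\C\setminus[-1,1]$, which completes the argument.
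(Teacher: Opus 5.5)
Your Step~1, identifying the point set $\Gamma_\delta$ with $\partial D_\delta$, is fine. But Step~2, on which both simplicity and the orientation in Step~3 rest, is false: $D_\delta$ is not star-shaped with respect to $0$, and $\arg\lambda$ is not monotone along the arcs. For $\lambda(t)=1+\delta e^{it}$ on $\mathcal A_R$,
\[
\frac{d}{dt}\arg\lambda(t)=\operatorname{Re}\frac{\delta e^{it}}{1+\delta e^{it}}=\frac{\delta(\delta+\cos t)}{|1+\delta e^{it}|^2}.
\]
This is negative for $\pi/2+\arcsin\delta<t\le\pi-\varphi$. That range is nonempty for every $\delta\in(0,1/2]$, since $\arcsin\delta\le\pi/6$ and $\varphi<\pi/4$. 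The bound $|\delta e^{it}|<1$ only tells you that the arc does not wind around $0$.

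Concretely, the largest argument on $\mathcal A_R$ is $\arcsin\delta$, attained at the point of tangency from $0$, whereas $\tan\arg p_R^+=\delta/(\sqrt5-2\delta)<\delta/\sqrt{1-\delta^2}$. Take any ray from $0$ with angle in $(\arg p_R^+,\arcsin\delta)$. It leaves $R$ through $\Sigma_R^+$ at a point outside the disc $\{|\lambda-1|\le\delta\}$, and only afterwards enters and leaves that disc through $\mathcal A_R$. So it meets $\Gamma_\delta$ three times. Hence the pieces do not occupy disjoint angular sectors and $\Gamma_\delta$ is not a radial graph. Your fallback via convexity near $1$ fails too: the curve turns clockwise by $\pi/2$ at $p_R^\pm$ and $p_L^\pm$, which are reflex corners of $D_\delta$.

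The missing idea is to use a Cartesian rather than a polar description. Traverse the upper half of $\Gamma_\delta$ from $1+\delta$ to $-1-\delta$. Its real part is strictly decreasing: on the arcs because $\cos t$ decreases on $[0,\pi-\varphi]$ and on $[\varphi,\pi]$, and on the sides directly from the parametrizations. Its imaginary part is positive except at the two endpoints. So the upper half is the graph of a continuous function $h$ on $[-1-\delta,1+\delta]$, positive in the interior and vanishing at the endpoints; by conjugation, the lower half is the graph of $-h$. This gives three things at once:
\begin{enumerate}
\item simplicity of $\Gamma_\delta$;
\item the interior as $\{a+ib:|a|<1+\delta,\ |b|<h(a)\}$, which contains $[-1,1]$;
\item positive orientation, because the upper half runs from right to left.
\end{enumerate}
This is exactly the paper's proof, and it makes your identification $\Gamma_\delta=\partial D_\delta$ unnecessary. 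In your language, the correct property of $D_\delta$ is that its vertical slices are intervals centred on the real axis. This holds for $R$ and for both discs, hence for their union; star-shapedness about $0$ does not hold.
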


\begin{proof}
	Let $\Gamma^+$ denote the upper half of
	$\Gamma_\delta$, traversed from $1+\delta$
	to $-1-\delta$. Along $\Gamma^+$, the real
	part is strictly decreasing: on the arcs
	this follows from the monotonicity of
	$\cos t$ on the relevant intervals, while
	on the sides it follows directly from their
	parametrizations.
	
	Hence $\Gamma^+$ is the graph of a continuous
	function $h$ on $[-1-\delta,1+\delta]$,
	positive in the interior and vanishing at
	the endpoints. The lower half is the graph
	of $-h$. Thus $\Gamma_\delta$ is a Jordan
	curve bounding the domain
	\[
	\{a+ib:|a|<1+\delta,\ |b|<h(a)\},
	\]
	which contains $[-1,1]$. Its positively oriented since $\Gamma^+$ is traversed
	from right to left.
\end{proof}
	
\begin{lemma}\label{lem:angular}
	For every $\delta\in(0,1/2]$ and every
	$\lambda\in\Gamma_\delta$,
	$
	\omega(\lambda)\le\pi-\varphi.
	$
	Consequently, $M_{\Gamma_\delta}\le M_0$.
\end{lemma}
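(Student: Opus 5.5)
By Lemma~\ref{lem:angle}(iii), $\omega$ is invariant under $\lambda\mapsto\bar\lambda$ and $\lambda\mapsto-\bar\lambda$. The contour $\Gamma_\delta$ is invariant under both maps: conjugation swaps upper and lower pieces, and $\lambda\mapsto-\bar\lambda$ swaps left and right pieces. It therefore suffices to treat the part of $\Gamma_\delta$ in the closed quadrant $\{\operatorname{Re}\lambda\ge0,\ \operatorname{Im}\lambda\ge0\}$. That part consists of the piece of $\mathcal A_R$ with $t\in[0,\pi-\varphi]$ together with the portion of $\Sigma_R^+$ from $p_R^+$ to $\frac i2$. On this set $\operatorname{Im}\lambda\ge0$, so Lemma~\ref{lem:angle}(iv) gives $\omega(\lambda)=\arg(\lambda-1)-\arg(\lambda+1)$ with both arguments in $[0,\pi]$. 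Since $\arg(\lambda+1)\ge0$, the key bound is
\[
\omega(\lambda)\le\arg(\lambda-1).
\]

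\emph{The arc.} For $\lambda=1+\delta e^{it}$ with $t\in[0,\pi-\varphi]$, we have $\lambda-1=\delta e^{it}$, so $\arg(\lambda-1)=t\le\pi-\varphi$. The endpoint $t=0$ gives $\lambda=1+\delta$, which is real with $|\lambda|>1$, so $\omega=0$.

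\emph{The side.} For $\lambda=1-s+\frac i2 s$ with $s\in[s_\delta,1]$, we have $\lambda-1=s(-1+\frac i2)$. Hence $\arg(\lambda-1)=\pi-\arctan(1/2)=\pi-\varphi$ for every such $s$.

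This covers the whole quadrant, since the remaining portions of $\Gamma_\delta$ are images of these pieces under the two symmetries. Note that the geometric lens picture of Figure~\ref{fig:contour} is not needed: the crude bound $\omega\le\arg(\lambda-1)$ already suffices on this quarter. We conclude $\omega(\lambda)\le\pi-\varphi$ on all of $\Gamma_\delta$.

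For the consequence, recall $M(\lambda)=\tan(\omega(\lambda)/2)$, and $\tan$ is increasing on $[0,\pi/2)$. Since $\omega(\lambda)/2\le(\pi-\varphi)/2<\pi/2$,
\[
M(\lambda)\le\tan\frac{\pi-\varphi}{2}=\cot\frac\varphi2=M_0 .
\]
Taking the supremum over $\lambda\in\Gamma_\delta$ gives $M_{\Gamma_\delta}\le M_0$.

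The only point needing care is checking that the symmetries really map $\Gamma_\delta$ onto itself as a set; orientation is irrelevant here. This follows directly from the parametrizations in Definition~\ref{def:contour}: conjugation maps $\mathcal A_R$ to itself and $\Sigma_R^+$ to $\Sigma_R^-$, while $\lambda\mapsto-\bar\lambda$ maps $1+\delta e^{it}$ to $-1+\delta e^{i(\pi-t)}$, so $\mathcal A_R\leftrightarrow\mathcal A_L$ and $\Sigma_R^\pm\leftrightarrow\Sigma_L^\pm$.
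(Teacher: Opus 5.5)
Your proof is correct and follows the paper's argument: the same symmetry reduction to the upper half of $\mathcal A_R$ and the side $\Sigma_R^+$, followed by Lemma~\ref{lem:angle}(iv) and the bound $\arg(\lambda-1)\le\pi-\varphi$. The paper also records $\arg(\lambda+1)\in[0,\varphi]$ on the side and $\arg(\lambda+1)\in[0,t]$ on the arc, but, as you observe, only $\arg(\lambda+1)\ge0$ is actually used.
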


\begin{proof}
	By the symmetries in Lemma~\ref{lem:angle}(iii),
	it suffices to consider $\Sigma_R^+$ and the
	upper half of $\mathcal A_R$.
	
	\emph{Side $\Sigma_R^+$.}
	For $\lambda=1-s+\frac{i}{2}s$, with
	$s\in[s_\delta,1]$, we have
	\[
	\arg(\lambda-1)=\pi-\varphi,
	\qquad
	\arg(\lambda+1)
	=\arctan\frac{s}{2(2-s)}\in[0,\varphi].
	\]
	Thus, by Lemma~\ref{lem:angle}(iv),
	$
	\omega(\lambda)
	=\arg(\lambda-1)-\arg(\lambda+1)
	\le\pi-\varphi.
	$
	
	\emph{Upper half of $\mathcal A_R$.}
	For $\lambda=1+\delta e^{it}$, with
	$t\in[0,\pi-\varphi]$, we have
	$
	\arg(\lambda-1)=t.
	$
	Since the sector of arguments in $[0,t]$
	is convex and contains both $2$ and
	$\delta e^{it}$, it also contains their
	sum $\lambda+1=2+\delta e^{it}$.
	Hence $\arg(\lambda+1)\in[0,t]$, and
	Lemma~\ref{lem:angle}(iv) gives
	$
	\omega(\lambda)
	=t-\arg(\lambda+1)
	\le t\le\pi-\varphi.
	$
	
	Finally, since
	$M(\lambda)=\tan(\omega(\lambda)/2)$
	and the tangent function is increasing on $[0,\pi/2)$,
	we obtain $M_{\Gamma_\delta}\le M_0$.
\end{proof}
	
	\begin{lemma}\label{lem:integral}
		There exists an absolute constant $K_0>0$ such that,
		for every integer $n\ge1$,
		\[
		\int_{\Gamma_{1/(n+1)}}
		\frac{|\lambda|^n}{d_\lambda}\,|d\lambda|
		\le K_0.
		\]
	\end{lemma}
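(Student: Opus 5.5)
With $\delta=1/(n+1)$ I would split $\Gamma_\delta$ into its six pieces. By the symmetries $\lambda\mapsto\bar\lambda$ and $\lambda\mapsto-\bar\lambda$, which preserve $|\lambda|$, $d_\lambda$ and arclength, it suffices to bound the contributions of the right arc $\mathcal A_R$ and the side $\Sigma_R^+$, and then multiply by a fixed constant.

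\emph{Arc.} On $\mathcal A_R$ we have $|\lambda|\le1+\delta$, so $|\lambda|^n\le(1+1/(n+1))^n\le e$. For $|t|\le\pi/2$ we have $\operatorname{Re}\lambda\ge1$, hence $d_\lambda=|\lambda-1|=\delta$. For $\pi/2\le|t|\le\pi-\varphi$ the point $\lambda$ has real part in $[-1,1]$, so $d_\lambda=|\operatorname{Im}\lambda|=\delta|\sin t|\ge\delta\sin\varphi=\delta/\sqrt5$. In both cases $d_\lambda\ge\delta/\sqrt5$. The arc has length at most $2\pi\delta$, so its contribution is at most $2\pi\sqrt5\,e$.

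\emph{Side.} On $\Sigma_R^+$ write $\lambda=1-s+\frac i2s$ with $s\in[s_\delta,1]$; then $|d\lambda|=\frac{\sqrt5}2\,ds$. Since $0<1-s<1$, the real part lies in $(-1,1)$, so $d_\lambda=\operatorname{Im}\lambda=s/2$. For the modulus,
\[
|\lambda|^2=(1-s)^2+\tfrac{s^2}{4}=1-2s+\tfrac54s^2\le1-\tfrac34s,
\]
using $s\le1$. Hence $|\lambda|^n\le(1-\frac34s)^{n/2}\le e^{-3ns/8}$. The contribution of the side is then at most
\[
\sqrt5\int_{s_\delta}^1 \frac{e^{-3ns/8}}{s}\,ds.
\]
Substituting $u=ns$ gives $\sqrt5\int_{ns_\delta}^{\infty}e^{-3u/8}\,\frac{du}{u}$. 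Here $ns_\delta=\frac{2n}{\sqrt5(n+1)}\ge\frac1{\sqrt5}$ for $n\ge1$, so the lower limit is bounded below by an absolute positive constant, and the integral is bounded independently of $n$.

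Summing the six contributions gives an absolute constant $K_0$. The main point, which the choice of contour is designed to handle, is the side estimate. There the singularity of $1/d_\lambda\sim 1/s$ near the endpoint is cut off at $s_\delta\sim1/n$ exactly at the scale where the exponential decay $e^{-cns}$ takes over. This is why $\delta$ must be comparable to $1/n$, and why the inequality $|\lambda|^2\le1-cs$ along the side is essential. The arc estimate is routine once the bound $d_\lambda\gtrsim\delta$ is in hand.
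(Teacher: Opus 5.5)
Your proof is correct and follows the paper's argument essentially step by step. It uses the same bound $d_\lambda\ge\delta/\sqrt5$ and $|\lambda|^n<e$ on the arcs, the same inequality $|\lambda|^2\le1-\frac34s$ on the sides, the same substitution $u=ns$ with $ns_\delta\ge1/\sqrt5$, and the same symmetries to handle the remaining pieces.
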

	\begin{proof} Fix $n\geq 1$ and $\delta=1/(n+1)$. We estimate separately the contributions of the two arcs and the four straight sides.
		
		\emph{Arcs.} Let $\lambda=1+\delta e^{it}\in\mathcal A_R$. If $\operatorname{Re}\lambda\ge1$, the closest point of $[-1,1]$ is $1$, and $d_\lambda=\delta$. If $\operatorname{Re}\lambda<1$, then $\cos t<0$, that is, $|t|\in(\pi/2,\pi-\varphi]$, and $\operatorname{Re}\lambda\ge1-\delta\ge\frac12$. In that case the closest point is the real projection, and
		\[
		d_\lambda=\delta|\sin t|\ge\delta\sin(\pi-\varphi)=\frac{\delta}{\sqrt5},
		\]
		because $\sin$ is decreasing on $[\pi/2,\pi]$. In both cases $d_\lambda\ge\delta/\sqrt5$. Moreover $|\lambda|^n\le(1+\delta)^n\le e^{n/(n+1)}<e$, and the length of the arc is $2(\pi-\varphi)\delta<2\pi\delta$. The contribution of $\mathcal A_R$ is therefore less than $2\pi\delta\cdot e\sqrt5/\delta=2\pi\sqrt5\,e$. The same holds for $\mathcal A_L$, by the symmetry $\lambda\mapsto-\bar\lambda$, which preserves $|\lambda|$ and $d_\lambda$.
		
		\emph{Sides.} On $\Sigma_R^+$, $\operatorname{Re}\lambda=1-s\in[0,1)$, so $d_\lambda=|\operatorname{Im}\lambda|=\frac s2$, and $|d\lambda|=|-1+\frac i2|\,ds=\frac{\sqrt5}2\,ds$. Moreover, for $0\le s\le1$,
		\[
		|\lambda|^2=(1-s)^2+\frac{s^2}{4}=1-2s+\frac54s^2\le1-\frac34s ,
		\]
		because $\frac54s^2\le\frac54s$. Hence $|\lambda|^n\le(1-\frac34s)^{n/2}\le e^{-3ns/8}$. The contribution of $\Sigma_R^+$ is therefore at most
		\[
		\int_{s_\delta}^1e^{-3ns/8}\,\frac{2}{s}\,\frac{\sqrt5}{2}\,ds=\sqrt5\int_{ns_\delta}^{n}\frac{e^{-3u/8}}{u}\,du\le\sqrt5\int_{1/\sqrt5}^\infty\frac{e^{-3u/8}}{u}\,du<\infty,
		\]
		where we used $ns_\delta=\frac{2n}{\sqrt5(n+1)}\ge\frac{1}{\sqrt5}$ for $n\ge1$. The other three sides have the same $|\lambda|$ and the same $d_\lambda$ as functions of $s$: for $\Sigma_L^\pm$, $|\lambda|^2=(1-s)^2+\frac{s^2}4$ and $\operatorname{Re}\lambda=-1+s\in(-1,0]$. Hence they satisfy the same bound. Adding the six contributions gives the desired absolute constant $K_0$. 
	\end{proof}
	
 We are now ready to complete the proof of Theorem~\ref{thm:main}.
 \begin{proof}[Proof of Theorem~\ref{thm:main}]
 	Assume first that $\norm{A'}_\infty\le1$.
 	Fix $n\ge1$ and set $\delta=1/(n+1)$.
 	By Lemma~\ref{lem:jordan}, the contour $\Gamma_\delta$
 	satisfies the hypotheses of Corollary~\ref{cor:principle}.
 	Lemmas~\ref{lem:angular} and~\ref{lem:integral}
 	therefore yield
 	\[
 	\norm{T_{A,n}}
 	\le\frac{C_{M_0}}{2\pi}
 	\int_{\Gamma_\delta}
 	\frac{|\lambda|^n}{d_\lambda}\,|d\lambda|
 	\le\frac{C_{2+\sqrt5}K_0}{2\pi}.
 	\]
 	This constant is finite by Theorem~\ref{thm:CMM}
 	and independent of $n$.
 	
 	For general $A$, let $L=\norm{A'}_\infty$.
 	If $L>0$, applying the preceding estimate to $A/L$
 	and using $T_{A,n}=L^nT_{A/L,n}$ gives the result.
 	If $L=0$, then $T_{A,n}=0$.
 \end{proof}

	\section{Sharpness and remarks}\label{sec:remarks}
	
	\begin{remark}
		Let $A(x)=Lx+b$ with $L>0$. All secant slopes equal $L$, and therefore $T_{A,n}=L^nH_0$, with $H_0f=\pv\int\frac{f(y)}{x-y}\,dy$. With the convention $\hat f(\xi)=\int e^{-ix\xi}f(x)\,dx$, the multiplier of $H_0$ is $-i\pi\operatorname{sgn}\xi$, so $\norm{T_{A,n}}=\pi L^n$. Thus the dependence on $\|A'\|_\infty^n$ is sharp. 
	\end{remark}
	
	\begin{remark}
		Only the finiteness of $C_{M_0}$ for the fixed value $M_0=2+\sqrt5$ is used. No quantitative bound on the dependence of $C_M$ on $M$ is needed. The value $\varphi=\arctan\frac12$ is not special. Any angle $\varphi\in(0,\pi/4)$  yields a similar construction, with straight sides of slopes $\pm\tan\varphi$. The resulting bounds depend on $\varphi$, but remain uniform in $n$.
	\end{remark}
	
	  \begin{remark}
		The theorem concerns the principal-value operators
		$T_{A,n}$, represented by the bilinear forms $t_n$.
		We do not establish bounds uniform in $n$ for the
		truncated operators or their maximal truncations.
	\end{remark}
	
	\medskip
\noindent\textbf{AI disclosure.}
The authors thank Juan Carlos Cantero for providing us with access
to OpenAI's GPT-6 model, which was used for mathematical
discussions and suggestions concerning the proof.
The authors take full responsibility for the mathematical
arguments and conclusions.
	

	\vspace{1.5cm}
	{\small
		\begin{tabular}{@{}l}
			\textsc{Joan\ Hernández,} \\ \textsc{Departament de Matem\`{a}tiques, Universitat Aut\`{o}noma de Barcelona,}\\
			\textsc{08193, Bellaterra (Barcelona), Catalonia.}\\
			{\it E-mail address}\,: \href{mailto:joan.hernandez@uab.cat}{\tt{joan.hernandez@uab.cat}}
		\end{tabular}
	}
	
	{\small
		\begin{tabular}{@{}l}
			\textsc{Joan\ Mateu,} \\ \textsc{Departament de Matem\`{a}tiques, Universitat Aut\`{o}noma de Barcelona,}\\
			\textsc{08193, Bellaterra (Barcelona), Catalonia.}\\
			{\it E-mail address}\,: \href{mailto:joan.mateu@uab.cat}{\tt{joan.mateu@uab.cat}}
		\end{tabular}
	}

	{\small
		\begin{tabular}{@{}l}
			\textsc{Laura\ Prat,} \\ \textsc{Departament de Matem\`{a}tiques, Universitat Aut\`{o}noma de Barcelona,}\\
			\textsc{08193, Bellaterra (Barcelona), Catalonia.}\\
			{\it E-mail address}\,: \href{mailto:laura.prat@uab.cat}{\tt{laura.prat@uab.cat}}
		\end{tabular}
	}  
	
\end{document}